\Crefname{subsection}{subsection}{subsections}
\newtheorem{theorem}{Theorem}[section]
\newtheorem{corollary}[theorem]{Corollary}
\newtheorem{lemma}[theorem]{Lemma}
\newtheorem{proposition}[theorem]{Proposition}
\newtheorem{remark}[theorem]{Remark}
\newtheorem{definition}[theorem]{Definition}
\newcommand\restr[2]{{
  \left.\kern-\nulldelimiterspace 
  #1 
  \vphantom{\big|} 
  \right|_{#2} 
  }}
\newcommand{\floor}[1]{\left\lfloor #1 \right\rfloor}
\newcommand{\ceil}[1]{\left\lceil #1 \right\rceil}
\newcommand{\inner}[1]{\left\langle{#1}\right\rangle}
\newcommand{\norm}[1]{\left\lVert#1\right\rVert}
\newcommand{\R}{\mathbb{R}} 
\renewcommand{\Pr}{\mathop{\bf Pr\/}}
\newcommand{\E}{\mathop{\mathbb E\/}}
\newcommand{\indicator}{\mathbbm{1}}
\newcommand{\Ent}{\mathop{\bf Ent\/}}
\newcommand{\sn}{{\mathbb{S}^{n-1} }}
\newcommand{\sk}{{\mathcal{S}_k }}
\newcommand{\FR}{{\mathrm{FR}_\gamma^n}}
\definecolor{MyDarkBlue}{rgb}{0,0.08,1}
\definecolor{MyDarkGreen}{rgb}{0.02,0.6,0.02}
\definecolor{MyDarkRed}{rgb}{0.8,0.02,0.02}
\definecolor{MyDarkOrange}{rgb}{0.40,0.2,0.02}
\definecolor{MyPurple}{RGB}{111,0,255}
\definecolor{MyRed}{rgb}{1.0,0.0,0.0}
\definecolor{MyGold}{rgb}{0.75,0.6,0.12}
\definecolor{MyDarkgray}{rgb}{0.66, 0.66, 0.66}
\date{}
\begin{document}
\title{\textbf{Density Frankl–Rödl on the Sphere}}
\author{
Venkatesan Guruswami\thanks{{Departments of EECS and Mathematics, and the Simons Institute for the Theory of Computing, UC Berkeley, Berkeley, CA, 94709, USA. Email: \url{venkatg@berkeley.edu}. Research supported by a Simons Investigator Award and NSF grants CCF-2210823 and CCF-2228287.}}
\and 
Shilun Li\thanks{Department of Mathematics, UC Berkeley, Berkeley, CA, 94709, USA. Email: \url{shilun@berkeley.edu}.} 
}
\maketitle
\begin{abstract}
We establish a density variant of the Frankl–Rödl theorem on the sphere $\mathbb{S}^{n-1}$, which concerns avoiding pairs of vectors with a specific distance, or equivalently, a prescribed inner product. In particular, we establish lower bounds on the probability that a randomly chosen pair of such vectors lies entirely within a measurable subset $A \subseteq \mathbb{S}^{n-1}$ of sufficiently large measure. Additionally, we prove a density version of spherical avoidance problems, which generalize from pairwise avoidance to broader configurations with prescribed pairwise inner products. Our framework encompasses a class of configurations we call \textit{inductive configurations}, which include simplices with any prescribed inner product $-1 < r < 1$. As a consequence of our density statement, we show that all non-antipodal inductive configurations are sphere Ramsey (for measurable colorings).
\vspace{0.2in}\\
\textbf{Keywords:} Frankl–Rödl, Sphere Ramsey, Sphere Avoidance, Reverse Hypercontractivity, Forbidden Angles
\end{abstract}

\section{Introduction}

The Frankl–R\"{o}dl theorem~\cite{frankl1987forbidden} is a foundational result in extremal combinatorics and theoretical computer science. It states that for any fixed \( 0 < \gamma < 1 \), assuming \( (1 - \gamma)n \) is even, if a set \( A \subseteq \{-1,1\}^n \) contains no pair of distinct points at Hamming distance exactly \( (1 - \gamma)n \), then the fractional size of \( A \) must be exponentially small. That is, there exists a constant \( \epsilon = \epsilon(\gamma) < 1 \) such that
\[
|A|/2^{n} \leq \epsilon^n.
\]
The corresponding Frankl–R\"{o}dl graph \( \FR \) is defined on vertex set \( \{-1,1\}^n \) with edges between points at Hamming distance \( (1 - \gamma)n \), with the Frankl-R\"{o}dl theorem bounding the independence number of this graph. This theorem has found broad applications in theoretical computer science, particularly in the analysis of hardness of approximation. The graph \( \FR \) has been used to construct integrality gap instances for problems such as 3-Coloring~\cite{kleinberg1998lovasz, karger1998approximate, charikar2002semidefinite, arora2011new, kauers2014hypercontractive} and Vertex-Cover~\cite{kleinberg1998lovasz, charikar2002semidefinite, arora2006proving, georgiou2008vertex, georgiou2010integrality, kauers2014hypercontractive}.

\smallskip
The Frankl–R\"{o}dl theorem extends naturally from pairs of points to general forbidden configurations. A configuration of \( k \) vertices is specified by a set of pairwise distances (or equivalently, inner products). Frankl and R\"{o}dl~\cite{frankl1987forbidden} showed that for any fixed \( k \), any subset \( A \subseteq \{-1,1\}^{4n} \) that avoids \( r \) pairwise orthogonal vectors must also have exponentially small fractional size. This result has been applied to show an \( \Omega(\log n) \) integrality gap for the SDP relaxation of Min-Multicut~\cite{agarwal2005log}.

\smallskip
While the original theorem asserts that for any \( A \subseteq \{-1,1\}^n \) of size \( |A|/2^n \geq \epsilon(\gamma)^n \),
\[
\Pr_{\substack{x, y \in \{-1,1\}^n \\ x \cdot y = (2\gamma - 1)n}}(x \in A, y \in A) > 0,
\]
it is natural to ask whether one can give a quantitative lower bound on this probability in terms of the density \( |A| / 2^n \). Benabbas, Hatami, and Magen~\cite{benabbas2012isoperimetric} answered this by proving a density version of the Frankl–R\"{o}dl theorem: for any \( 0 < \alpha < 1 \), if \( |A| / 2^n \geq \alpha \), then
\[
\Pr_{\substack{x, y \in \{-1,1\}^n \\ x \cdot y = (2\gamma - 1)n}}(x \in A, y \in A) \geq 2\left(\alpha/2 \right)^{\frac{2}{1 - |2\gamma - 1|}} - o(1).
\]
This result is based on the reverse hypercontractivity of the Bonami–Beckner semigroup~\cite{mossel2006non}, and was used to prove integrality gaps for Vertex-Cover. Later Kauers et. al \cite{kauers2014hypercontractive} showed that the proof can be conducted in the sum-of-squares (SOS) proof system. In particular, they showed that for any $0<\gamma\leq 1/4$, the SOS/Lasserre SDP hierarchy at degree $4\ceil{\frac{1}{4\gamma}}$ certifies that the maximum independent set in $\FR$ has fractional size $o(1)$. This implies that a degree-4 algorithm from the SOS hierarchy can certify that the $\FR$ SDP integrality gap instances for 3-Coloring have chromatic number $\omega(1)$, and that a degree-$\ceil{1/\gamma}$ SOS algorithm can certify the $\FR$ SDP integrality gap instances for Min-Vertex-Cover has minimum vertex cover $>1-o(1)$.

\smallskip In the continuous setting, the sphere \( \mathbb{S}^{n-1} \) provides a natural high-dimensional geometric analogue. This motivates the study of spherical avoidance problems: how dense can a subset \( A \subseteq \mathbb{S}^{n-1} \) be while avoiding a fixed configuration of pairwise inner products? Let \( \Delta_k(n, r) \) denote the \( k \)-simplex in \( \mathbb{S}^{n-1} \) with pairwise inner product \( r \). Witsenhausen's problem~\cite{witsenhausen1974spherical} asks for the maximum density of a measurable set \( A \subseteq \mathbb{S}^{n-1} \) avoiding orthogonal pairs, i.e., \( \Delta_2(n, 0) \). Frankl and Wilson~\cite{frankl1981intersection} gave the first exponentially decreasing upper bound \( \sigma(A) \leq (1+o(1))(1.13)^{-n} \), where \( \sigma \) denotes the uniform surface measure on \( \mathbb{S}^{n-1} \). Kalai~\cite{kalai2009large} conjectured that the extremal set consists of two opposite caps of geodesic radius \( \pi/4 \); this \emph{Double Cap Conjecture} implies new lower bounds for the measurable chromatic number of \( \mathbb{R}^n \)~\cite{decorte2016spherical}. 

\smallskip
Regev and Klartag~\cite{regev2011quantum} established a density version of the Frankl–Rödl theorem on the sphere for pairs of orthogonal vectors:
$$
\Pr_{\substack{x, y \in \sn \\ x \cdot y = 0}}(x \in A,\ y \in A) \geq 0.9\,\sigma(A)^2,
$$
valid for any measurable set $A \subseteq \sn$ with $\sigma(A) \geq C\exp(-cn^{1/3})$, where $C$ and $c$ are universal constants. This result was a key component in their proof of an $\Omega(n^{1/3})$ lower bound on the classical communication complexity of the Vector in Subspace Problem (VSP). As a major consequence, they resolved a long-standing open question posed by Raz~\cite{raz1999exponential}, demonstrating that quantum one-way communication is indeed exponentially stronger than classical two-way communication.

\smallskip
For configurations \( \Delta_k(n, r) \) with \( r > 0 \), Castro-Silva et al.~\cite{castro2022recursive} proved that for any \( k \geq 2 \), there exists \( c = c(k, r) < 1 \) such that any \( A \subseteq \mathbb{S}^{n-1} \) avoiding \( \Delta_k(n, r) \) satisfies \( \sigma(A) \leq (c + o(1))^n \). A configuration is sphere Ramsey if any $c$-coloring of the sphere contains a monochromatic congruent copy of the configuration. Castro-Silva et al.'s result implied that all simplicial configurations \( \Delta_k(n, r) \) with $r>0,\ k\geq 2$ are sphere Ramsey. On the other hand, Matoušek and R\"{o}dl~\cite{matouvsek1995ramsey} showed that any configuration \( P \) with circumradius less than 1 is sphere Ramsey. 

\smallskip
This sphere Ramsey theorem was recently used by Brakensiek, Guruswami, and Sandeep~\cite{brakensiek2023sdps} to demonstrate integrality gaps for the basic SDP relaxation of certain promise CSPs: namely Boolean symmetric promise CSPs defined by a single predicate pair that lack Majority or Alternate-Threshold (AT) polymorphisms of all odd arities. Via Raghavendra's general connection tightly linking SDP integrality gaps to Unique-Games hardness~\cite{raghavendra2008optimal}, this enabled BGS to conclude that such promise CSPs do not admit a robust satisfiability algorithm (in the sense of Zwick~\cite{zwick1998finding}) under the Unique Games conjecture. Complementing this hardness result, BGS gave a robust satisfiability algorithm for Boolean promise CSPs that admit Majority polymorphisms of all odd arities or AT polymorphisms of all odd arities---the algorithm applied in the general promise CSP setting that could have multiple predicate pairs. Together these results led to a dichotomy theorem with respect to robust satisfiability for Boolean symmetric promise CSPs defined by a single predicate. Towards extending their hardness result to Boolean symmetric PCSPs that could include multiple predicate pairs, BGS posed the following problem: can one show a density version of spherical avoidance problems for the configuration $(x_1,\cdots,x_b,-x_{b+1},\cdots, -x_k)$ where $(x_1,\cdots,x_b,x_{b+1},\cdots, x_k)\in\Delta_{k}(n,r)$. Namely, obtain a non-trivial lower bound on the quantity
$$\Pr_{(x_1,\cdots,x_b,x_{b+1},x_k)\in\Delta_{k}(n,r)}\left(x_1\in A,\cdots,x_b\in A,-x_{b+1}\in A,\cdots, -x_k\in A\right).$$
in terms of $\sigma(A)$. While we do not directly resolve the exact configuration posed by BGS, our result applies to a broad class of configurations that includes closely related structures.

\smallskip
In this paper, we show the following result:
\begin{theorem}
    Fix $k,r$ such that \( -\frac{1}{k-1} < r < 1 \), there exists constant \( C = C(k,r),\ \epsilon=\epsilon(k,r) \) such that
\[
\Pr_{x_1,\dots,x_k \in \Delta_k(n,r)}(x_i \in A\ \forall i) \geq \Omega_{k,r}\left(\sigma(A)^C\right)
\]
for all measurable \( A \subseteq \mathbb{S}^{n-1} \) with \( \sigma(A) \geq \omega_{k,r}(n^{-\epsilon}) \).
\end{theorem}

Our result generalizes the work of Castro-Silva et al. to simplices with any \( -\frac{1}{k-1} < r < 1 \), and more broadly, to a class of \emph{inductive configurations} defined in \Cref{sec: sphere configurations} (see \Cref{thm: density R configuration}). In particular, we show that all non-antipodal inductive configurations are sphere Ramsey.

\smallskip
Inspired by the techniques of Benabbas et al.~\cite{benabbas2012isoperimetric}, who used reverse hypercontractivity of the Bonami–Beckner semigroup to show the density variant of Frankl–R\"{o}dl on $\{-1,1\}^n$, we develop analogous tools on the sphere. Specifically, we prove reverse hypercontractivity of the operator
\[
A_r f(x) := \E_{\substack{y \in \mathbb{S}^{n-1} \\ x \cdot y = r}} [f(y)],
\]
which enables us to derive density versions of spherical avoidance for inductive configurations. In \Cref{sec: spherical harmonics}, we analyze the eigen-decomposition of \( A_r \); in \Cref{sec: reverse hypercontrative}, we relate \( A_r \) to the Poisson Markov semigroup \( P_t \) and establish reverse hypercontractive inequalities. Finally, in \Cref{sec: sphere configurations}, we prove our main results on density versions of the Frankl–R\"{o}dl theorem on the sphere and density spherical Ramsey statements for inductive configurations.

\section{Eigen-decomposition via Spherical Harmonics}\label{sec: spherical harmonics}
In order to analyze the eigen-decomposition of $A_r$, we first introduce some fundamental notions from the theory of spherical harmonics. This decomposition of $A_r$ plays a central role in our analysis of reverse hypercontractivity. For a comprehensive background, we refer to the standard references \cite{muller2006spherical, stein1971introduction} and notes such as \cite{gallier2009notes}.

\subsection{Spherical Harmonics}

For each integer $n \geq 2$, let $\mathbb{S}^{n-1} \subseteq \mathbb{R}^n$ denote the unit $n$-sphere, defined by
\[
\mathbb{S}^{n-1} = \{ x \in \mathbb{R}^n : x \cdot x = 1\}.
\]
Given $p > 0$ and a measurable function $f: \mathbb{S}^{n-1} \to \mathbb{R}$, we define its $L^p$ norm by
\[
\|f\|_{L^p(\mathbb{S}^{n-1})} := \left( \int_{\mathbb{S}^{n-1}} |f(x)|^p \, d\sigma(x) \right)^{1/p},
\]
where $\sigma$ denotes the uniform surface measure on $\mathbb{S}^{n-1}$, normalized such that $\int_{\mathbb{S}^{n-1}} d\sigma(x) = 1$. It follows that for $p \geq q > 0$, we have $L^p(\mathbb{S}^{n-1}) \subseteq L^q(\mathbb{S}^{n-1})$.

For $q < 0$ and a strictly positive function $f > 0$, we extend the definition by
\[
\|f\|_q := \left( \int_{\mathbb{S}^{n-1}} f(x)^q \, d\sigma(x) \right)^{1/q}.
\]
Note that $\|f\|_q$ is well-defined for $f > 0$ and $q < 0$, since $f^q > 0$ and $1/q < 0$ reverses the inequality direction (so this is a quasi-norm satisfying $\|cf\|_q = c\|f\|_q$ for $c > 0$).

\smallskip
We focus in particular on $L^2(\mathbb{S}^{n-1})$, the space of square-integrable functions. It admits an orthonormal basis consisting of spherical harmonics $\{Y_{k,\ell}\}$, indexed by integers $k \geq 0$ (the degree) and $1 \leq \ell \leq c_{k,n}$, where
\[
c_{k,n} = \binom{k+n-2}{k} + \binom{k+n-3}{k-1}.
\]
Each $Y_{k,\ell}$ is the restriction to $\mathbb{S}^{n-1}$ of a harmonic, homogeneous polynomial of degree $k$ on $\mathbb{R}^n$. We denote by $\mathcal{S}_k \subseteq L^2(\mathbb{S}^{n-1})$ the finite-dimensional subspace spanned by spherical harmonics of degree $k$:
\[
\mathcal{S}_k := \operatorname{span} \{ Y_{k,1}, \dotsc, Y_{k,c_{k,n}} \}.
\]
Notably, $\mathcal{S}_0$ consists of the constant functions. Rotations $U \in \mathrm{SO}(n)$ act on $f \in L^2(\mathbb{S}^{n-1})$ via
\[
(Uf)(x) := f(U^{-1}x).
\]
Each space $\mathcal{S}_k$ is invariant under the action of $\mathrm{SO}(n)$, and furnishes an irreducible representation, i.e. any subspace $V\subseteq\sk$ that is invariant under rotations must either be $\{0\}$ or $\sk$. Moreover, for $n \geq 3$, the spaces $\mathcal{S}_k$ for different $k$ have different dimensions therefore are inequivalent as representations. Throughout the remainder of this paper, we assume $n \geq 3$.

\smallskip
For $r \in (0,1]$ (setting $t = -\log r \geq 0$), define an operator $A_r : L^2(\mathbb{S}^{n-1}) \to L^2(\mathbb{S}^{n-1})$ by
\[
(A_r f)(x) := \int_{\mathbb{S}^{n-1}} f(y) \, d\sigma_{x,r}(y) = \E_{\substack{y\in \sn\\\  x \cdot y = r}} [f(y)],
\]
where $\sigma_{x,r}$ is the uniform probability measure on the $(n-2)$-subsphere $S_{x,r}:=\{ y \in \mathbb{S}^{n-1} : x \cdot y = r \}$.

The operator $A_r$ is easily seen to commute with rotations via
$$(A_r U f)(x)=\E_{\substack{y\in \sn\\\  x \cdot y = r}} [f(U^{-1}y)]=\E_{\substack{z\in \sn\\\  x \cdot Uz = r}} [f(z)]=\E_{\substack{z\in \sn\\\  U^{-1}x \cdot z = r}} [f(z)]=(UA_rf)(x)$$
for any $U\in \mathrm{SO}(n)$.
It is also self-adjoint by
$$\inner{A_r f,g}=\E_{x\in \sn}[g(x)A_rf(x)]=\E_{x\in \sn}\left[\E_{y:x\cdot y=r}[g(x)f(y)]\right]=\E_{y\in \sn}\left[\E_{x:x\cdot y=r}[g(x)f(y)]\right]=\inner{f,A_r g}.$$
for all $f,g \in L^2(\mathbb{S}^{n-1})$.

\smallskip
Using standard techniques from representation theory, we establish that the spaces $\mathcal{S}_k$ are eigenspaces of $A_r$:

\begin{lemma}\label{lem: eigenspace}
For any $k \geq 0$ and $Y_k \in \mathcal{S}_k$, we have
\[
A_r(Y_k) = \mu_{k,r} Y_k,
\]
where $\mu_{k,r} = G_k(r)$, with $G_k : [-1,1] \to \mathbb{R}$ denoting the degree-$k$ Gegenbauer polynomial (see,e.g., \cite{muller2006spherical}), given by
\[
G_k(r) = \mathbb{E} \left[ \left( r + i X_1 \sqrt{1 - r^2} \right)^k \right],
\]
where $X = (X_1, \dotsc, X_{n-1})$ is uniformly distributed on $\mathbb{S}^{n-2}$ and $X_1$ denotes any fixed coordinate.
\end{lemma}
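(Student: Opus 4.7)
The plan is to combine an equivariance argument with Schur's lemma, then pin down the eigenvalue by evaluating $A_t$ on a single convenient harmonic.

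First I would observe that $A_t$ commutes with the $\mathrm{SO}(n)$-action: for any $U\in\mathrm{SO}(n)$, the measure $\sigma_{x,e^{-t}}$ pushes forward under $U$ to $\sigma_{Ux,e^{-t}}$, so $A_t(Uf)=U(A_tf)$. Combined with the $\mathrm{SO}(n)$-invariance of $\sk$ noted in the excerpt, this forces $A_t(\sk)\subseteq\sk$. Since the $\sk$ are pairwise inequivalent absolutely irreducible representations of $\mathrm{SO}(n)$ for $n\geq 3$, Schur's lemma then implies $A_t|_{\sk}=\mu_{k,t}\cdot I$ for some scalar $\mu_{k,t}$.

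To identify $\mu_{k,t}$, I would test against the harmonic $Y_k(y):=(y_1+iy_2)^k=(v\cdot y)^k$, where $v=e_1+ie_2\in\C^n$. Since $v\cdot v=0$, the Laplacian $\Delta (v\cdot y)^k=k(k-1)(v\cdot v)(v\cdot y)^{k-2}$ vanishes, so $Y_k|_{\sn}\in\sk$, and clearly $Y_k(e_1)=1$. Parametrising the fiber $\{y:y\cdot e_1=e^{-t}\}$ by $y=e^{-t}e_1+\sqrt{1-e^{-2t}}\,(0,X_1,\ldots,X_{n-1})$ with $X=(X_1,\ldots,X_{n-1})$ uniform on $\mathbb{S}^{n-2}$ yields
\[
(A_tY_k)(e_1)=\E\bigl[(e^{-t}+i\sqrt{1-e^{-2t}}\,X_1)^k\bigr]=G_k(e^{-t}),
\]
so $\mu_{k,t}=G_k(e^{-t})$, as claimed.

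I don't anticipate any serious obstacle here: the representation-theoretic inputs (rotation-equivariance, irreducibility of each $\sk$, and inequivalence across degrees) are classical, and the final integral is a one-line change of variables made clean by the choice of isotropic vector $v$, which guarantees that $(v\cdot y)^k$ is automatically a harmonic polynomial. The only point worth verifying carefully is that one may apply Schur over $\R$ rather than $\C$; this is fine because the $\sk$ remain irreducible under complexification, so scalars are the only intertwiners.
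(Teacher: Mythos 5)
Your proposal is correct and follows essentially the same route as the paper: equivariance of $A_t$ plus Schur's lemma on the pairwise-inequivalent irreducibles $\mathcal{S}_k$, followed by evaluating $A_t$ on one explicit degree-$k$ harmonic at a point where it equals $1$. The only (minor, and slightly more self-contained) difference is your choice of test function $(y_1+iy_2)^k$ built from an isotropic vector, whose harmonicity you verify directly and whose fiber average reproduces the stated expectation formula for $G_k(e^{-t})$, whereas the paper instead cites the standard fact that the zonal function $G_k(\langle\cdot,v\rangle)$ lies in $\mathcal{S}_k$.
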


\begin{proof}
By Schur’s Lemma (see e.g. \cite{serre1977linear}), any linear operator commuting with the group action must act as a scalar on each irreducible representation. Since $A_r$ commutes with rotations and $\mathcal{S}_k$ are inequivalent irreducible representations for different $k$, the restriction of $A_r$ to $\mathcal{S}_k$ must be a scalar multiple of the identity.

To identify the eigenvalue, consider the function $f_k(x) := G_k(x \cdot v)$ for some fixed $v \in \mathbb{S}^{n-1}$. It is known that $f_k \in \mathcal{S}_k$ \cite{muller2006spherical}. Then
\[
(A_r f_k)(v) = G_k(r), \quad \text{and} \quad f_k(v) = G_k(1) = 1,
\]
showing that $\mu_{k,r} = G_k(r)$.
\end{proof}

\subsection{Eigenvalue Estimates}
We now estimate the eigenvalues $\mu_{k,r}$.

\begin{lemma}\label{lem: mu estimate}
The eigenvalues $\mu_{k,r}$ satisfy
\[
|\mu_{k,r} - r^k| = O_r\!\left(\frac{1}{n}\right),
\]
as $n \to \infty$, uniformly in $k$.
\end{lemma}
\begin{proof}
Since $X_1$ has an even density, odd moments vanish. For even moments, \cite{regev2011quantum} Lemma~5.5 gives $\E[X_1^{2a}] \leq (2a/(n-4))^a$.

\textbf{Case $k \leq n/10$.}
From $G_k(r) = \E[(r+iX_1\sqrt{1-r^2})^k]$ and vanishing of odd moments,
\[
\mu_{k,r} - r^k
= \sum_{a=1}^{\floor{k/2}}(-1)^a\binom{k}{2a}(1-r^2)^a r^{k-2a}\,\E[X_1^{2a}].
\]
Applying $\binom{k}{2a}\leq k^{2a}/(2a)!$, the moment bound, and $k^{2a}|r|^{k-2a} \leq (2a/(e|\!\log|r||))^{2a}$ (maximum of $j\mapsto j^{2a}|r|^j$ at $j=2a/|\!\log|r||$), then $(2a)!\geq(2a/e)^{2a}$:
\[
|\mu_{k,r}-r^k|
\;\leq\; \sum_{a=1}^{\infty}
\left(\frac{2(1-r^2)}{(n-4)\log^2(1/|r|)}\right)^{\!a} a^a
\;=:\; \sum_{a=1}^\infty v^a\, a^a,
\]
where $v = 2(1-r^2)/((n-4)\log^2(1/|r|)) = O_r(1/n)$.
The series $\sum a^a v^a$ converges for $|v|<1/e$ (ratio test: consecutive ratio $\to e|v|$) and equals $O(v) = O_r(1/n)$.

\textbf{Case $k \geq n/10$.}
By Markov's inequality and the moment bound,
$\Pr(|X_1|\geq\tfrac12) \leq (4\lceil n/20\rceil/(n-4))^{n/20} = O(1/n)$.
Hence, using $|G_k(r)|\leq\E[(r^2+(1-r^2)X_1^2)^{k/2}]$,
\begin{align*}
|\mu_{k,r}-r^k|
&\leq \Pr\!\left(|X_1|<\tfrac12\right)\!\left(\tfrac{3r^2+1}{4}\right)^{k/2}
  +\Pr\!\left(|X_1|\geq\tfrac12\right)+|r|^{n/10}\\
&\leq \left(\tfrac{3r^2+1}{4}\right)^{n/20}+O\!\left(\tfrac1n\right)+O_r\!\left(\tfrac1n\right)
= O_r\!\left(\tfrac1n\right).\qedhere
\end{align*}
\end{proof}

\section{Reverse Hypercontractivity of \texorpdfstring{$A_r$}{TEXT}}\label{sec: reverse hypercontrative}
Hypercontractive inequalities play a central role in modern analysis, probability, and theoretical computer science, quantifying how semigroups ``smooth out'' irregularities over time by contracting \(L^q\) norms to \(L^p\) norms. A semigroup \( (T_t)_{t \geq 0} \) is hypercontractive if for all real-valued functions \(f\),
\[
\|T_t f\|_p \leq \|f\|_q,
\]
for \(1 < q < p < \infty\), whenever \(t \ge \tau(p,q)\). These inequalities are deeply connected to logarithmic Sobolev inequalities \cite{gross1975logarithmic} and underlie results on quantum information theory \cite{montanaro2012some}, diffusion processes \cite{bakry2006diffusions}, and mixing times \cite{bobkov2003modified}.

On the Boolean hypercube \(\{0,1\}^n\), a canonical example is the \emph{Bonami–Beckner semigroup} \(T_\rho\), where \(0 \leq \rho \leq 1\), defined as
\[
T_\rho f(x) = \mathbb{E}[f(y)], \quad \text{where } y \sim_\rho x,
\]
and \(y\) is obtained by flipping each bit of \(x\) independently with probability \(\frac{1 - \rho}{2}\). The operator satisfies the sharp hypercontractive inequality \cite{bonami1970etude, nelson1973free, beckner1975inequalities, gross1975logarithmic}:
\[
\|T_\rho f\|_p \le \|f\|_q \quad \text{if } \rho \le \sqrt{\frac{q-1}{p-1}}
\]
for $1<q<p<\infty$. 
This inequality underpins many foundational results in the analysis of Boolean functions, such as the KKL theorem \cite{mossel2013reverse}, the invariance principle \cite{mossel2013reverse}.

In contrast, reverse hypercontractivity, introduced by Mossel, Oleszkiewicz, and Sen \cite{mossel2013reverse}, captures a complementary ``anti-smoothing'' phenomenon. A semigroup \( (T_t)_{t \geq 0} \) is reverse hypercontractive if for all non-negative functions \(f\),
\[
\|T_t f\|_q \ge \|f\|_p,
\]
for \(0 < q < p < 1\), whenever \(t \ge \tau(p,q)\). This inequality lower-bounds the dispersion of mass under the semigroup and connects to reverse log-Sobolev inequalities \cite{mossel2013reverse}, Gaussian isoperimetry \cite{mossel2015robust}, and tail bounds in correlated settings.

On the Boolean hypercube, the Bonami–Beckner semigroup \(T_\rho\) also satisfies reverse hypercontractive inequalities \cite{mossel2013reverse}: for all non-negative $f : \{0,1\}^n \to \mathbb{R}_{\geq 0}$,
\[
\|T_\rho f\|_q \ge \|f\|_p\quad\text{if } \rho < \sqrt{\frac{1 - p}{1 - q}} 
\]
for all $0<q<p<1$. 

Reverse hypercontractivity has enabled several significant applications on the hypercube. It was used to prove the ``It Ain’t Over Till It’s Over" conjecture \cite{khot2002power} from social choice theory \cite{mossel2005noise}, and dimension-free Gaussian isoperimetric inequalities \cite{mossel2015robust}.

The operator $A_r$ serves as the analogue of the Bonami–Beckner operator on the sphere. In this section, we demonstrate that $A_r$ is close to the Poisson Markov semigroup $P_t$ in the $L^2$ norm. Using the logarithmic Sobolev inequality for $P_t$, we then establish that $P_t$ satisfies reverse hypercontractivity inequalities, which in turn imply reverse hypercontractivity inequalities for $A_r$.

\subsection{Poisson Markov Semigroup}
\begin{definition}
A family \( (P_t)_{t \geq 0} \) of operators on real-valued measurable functions on \( \sn \) is called a \emph{Markov semigroup} if it satisfies the following properties:
\begin{enumerate}[label=(\roman*)]
    \item For each \( t \geq 0 \), \( P_t \) is a linear operator mapping bounded measurable functions to bounded measurable functions.
    \item \( P_t(1) = 1 \), where \( 1 \) denotes the constant function on \( \sn \) (mass conservation).
    \item If \( f \geq 0 \), then \( P_t f \geq 0 \) (positivity preservation).
    \item \( P_0 = \mathrm{Id} \), the identity operator.
    \item For any \( s, t \geq 0 \), \( P_{s+t} = P_s \circ P_t \) (semigroup property).
    \item For each \( f \), the map \( t \mapsto P_t f \) is continuous.
\end{enumerate}
\end{definition}

Operators satisfying properties (i)–(iii) are called \emph{Markov operators}. For any function \( f \in L^2(\sn) \), we can express it in terms of the spherical harmonic basis as
\[
f(x) = \sum_{k,l} \widehat{f_{k,l}} Y_{k,l}(x),
\]
where \( \widehat{f_{k,l}} = \langle f, Y_{k,l} \rangle \). Define the \emph{Poisson semigroup}\footnote{In the literature, e.g., \cite{beckner1992sobolev}, the Poisson semigroup is typically defined by \( P_r f(x) = \sum_{k,l} r^k \widehat{f_{k,l}} Y_{k,l}(x) \) with multiplicative semigroup property \( P_{rs} = P_r \circ P_s \). We adopt a slightly modified definition here to match the additive semigroup convention of Markov semigroups.} \( (P_t)_{t \geq 0} : L^2(\sn) \to L^2(\sn) \) by
\[
P_t f(x) = \sum_{k,l} e^{-kt} \widehat{f_{k,l}} Y_{k,l}(x).
\]
As shown in \cite{beckner1992sobolev}, this can equivalently be written in terms of the Poisson kernel
\[
K_r(x, y) = \frac{1 - r^2}{|x - r y|^n}, \quad (-1 \leq r \leq 1),
\]
so that
\[
P_t f(x) = \int_{\sn} K_{e^{-t}}(x, y) f(y)\, d\sigma(y) = \int_{\sn} K_r(x, y) f(y)\, d\sigma(y) \quad (r = e^{-t}).
\]
It is straightforward to verify that \( P_t \) satisfies the Markov semigroup properties. 

We now show that \( A_r \) is close to \( P_t \) in the \( L^2 \)-norm (where $t = -\log r$).

\begin{lemma}
\label{lem: equivalence of A and P}
    For any \( f \in L^2(\sn) \),
    \[
    \| A_r f - P_t f \|_2 = O_r\left(\frac{1}{n}\right) \| f \|_2
    \]
    as \( n \to \infty \).
\end{lemma}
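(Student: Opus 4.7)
The plan is to exploit that $A_t$ and $P_t$ are simultaneously diagonalized by the spherical harmonic basis: by \Cref{lem: eigenspace}, $A_t$ acts on $\sk$ as multiplication by $\mu_{k,t}$, while by construction $P_t$ acts on $\sk$ as multiplication by $e^{-kt}$. Decomposing $f = \sum_{k \geq 0} f_k$ with $f_k$ the orthogonal projection of $f$ onto $\sk$, Parseval's identity yields
\[
\|A_t f - P_t f\|_2^2 \;=\; \sum_{k \geq 0} (\mu_{k,t} - e^{-kt})^2 \|f_k\|_2^2 \;\leq\; \Bigl(\sup_{k \geq 0} |\mu_{k,t} - e^{-kt}|\Bigr)^2 \|f\|_2^2,
\]
so it suffices to show the uniform eigenvalue estimate $\sup_k |\mu_{k,t} - e^{-kt}| = O_t(n^{-2/3})$.

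To establish this I would split at the threshold $k_0 = n^{1/3}$. For $k \geq k_0$, \Cref{lem: high degree eigenvalue} gives $|\mu_{k,t}| = O(1/n)$, and $e^{-kt} \leq e^{-n^{1/3}t}$ is super-polynomially small, so $|\mu_{k,t} - e^{-kt}| = O_t(1/n) \ll O_t(n^{-2/3})$. For $k \leq k_0$, I would revisit the series bound in the proof of \Cref{lem: small degree eigenvalue}: the upper bound $\sum_{a=1}^{\lfloor k/2\rfloor} (1-e^{-2t})^a e^{-t(k-2a)} (k/(n-4))^a$ can be rewritten as $e^{-tk} \sum_{a \geq 1} \bigl((e^{2t}-1)k/(n-4)\bigr)^a$, a geometric series in $a$ with ratio $O_t(k/n)$. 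Since this ratio is small when $k \leq n^{1/3}$, the sum is $O_t(k/n)$, which is at most $O_t(n^{-2/3})$ in this range. Combining the two cases and plugging back into the Parseval identity completes the argument.

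The main obstacle is extracting the sharp $k$-dependence from the proof of \Cref{lem: small degree eigenvalue}. If one simply used its stated uniform bound $O_t(n^{-1/3})$ as a black box, Parseval would only give $\|A_t f - P_t f\|_2 = O_t(n^{-1/3}) \|f\|_2$, which is a factor $n^{1/3}$ too weak. Recognizing that the small-$k$ computation actually produces the finer $O_t(k/n)$ estimate—decaying with $n$ even at the boundary $k = n^{1/3}$—is the delicate step that closes the gap and yields the claimed $O_t(n^{-2/3})$ rate. Once this sharper eigenvalue bound is in hand, the proof reduces to routine case-splitting and Parseval.
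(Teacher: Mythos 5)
Your proposal is correct and follows the same basic route as the paper: diagonalize both operators in the spherical harmonic basis, apply Parseval, and split the eigenvalue estimate at $k = n^{1/3}$ using \Cref{lem: small degree eigenvalue} and \Cref{lem: high degree eigenvalue}. However, the refinement you flag as the ``delicate step'' is not merely optional polish --- it repairs a genuine deficiency in the paper's own argument. The paper invokes the uniform bound $|\mu_{k,t}-e^{-kt}| = O_t(n^{-1/3})$ from \Cref{lem: small degree eigenvalue} as a black box and concludes $\|A_tf - P_tf\|_2^2 = O_t(n^{-2/3})\|f\|_2^2$, which after taking square roots only yields $\|A_tf-P_tf\|_2 = O_t(n^{-1/3})\|f\|_2$, a factor $n^{1/3}$ short of the stated lemma; and the application in \Cref{thm: two function reverse hypercontractivity} genuinely needs the unsquared bound $O_t(n^{-2/3})$, since it passes through Cauchy--Schwarz as $|\langle f,(A_t-P_t)g\rangle| \le \|f\|_2\,\|(A_t-P_t)g\|_2$. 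Your observation that the series bounding $|\mu_{k,t}-e^{-kt}|$ is geometric with ratio $(e^{2t}-1)k/(n-4) = O_t(n^{-2/3})$ for $k \le n^{1/3}$, hence sums to $O_t(k/n) \le O_t(n^{-2/3})$, supplies exactly the missing sharpness. (Keeping the prefactor $e^{-tk}$ would even give the uniform bound $O_t(1/n)$ in the small-$k$ regime, but $O_t(k/n)$ already suffices.) The high-degree regime and the Parseval step in your write-up match the paper and are fine.
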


\begin{proof}
    Expressing \( f \) in the spherical harmonic basis as \( f(x) = \sum_{k,l} \widehat{f_{k,l}} Y_{k,l}(x) \), we have
    \[
    \| A_r f - P_t f \|_2^2 = \left\| \sum_{k,l} (\mu_{k,r} - r^k) \widehat{f_{k,l}} Y_{k,l} \right\|_2^2 = \sum_{k,l} |\mu_{k,r} - r^k|^2 \left| \widehat{f_{k,l}}\right|^2.
    \]
    By \Cref{lem: mu estimate},
    \[
    |\mu_{k,r} - r^k|^2 \leq O_r(n^{-2})
    \]
    where we can conclude
    \[
    \| A_r f - P_t f \|_2^2 \leq O_r\left(n^{-2}\right) \sum_{k,l} \left| \widehat{f_{k,l}} \right|^2 = O_r\left(n^{-2}\right) \| f \|_2^2\ . \qedhere
    \]
\end{proof}

\subsection{Reverse Hypercontractivity}

Mossel, Oleszkiewicz, and Sen~\cite{mossel2013reverse} showed that for Markov semigroups, reverse hypercontractivity inequalities follow from log-Sobolev inequalities. We will use this fact to establish reverse hypercontractivity for the Poisson semigroup \( P_t \). For any non-negative function \( f \geq 0 \), define its entropy by
\[
\Ent(f) = \E[f \log f] - \E[f] \cdot \log \E[f].
\]
For a Markov semigroup \( (P_t)_{t \geq 0} \) with generator \( L \), the Dirichlet form is defined as
\[
\mathcal{E}(f, g) = \E[f L g] = \E[g L f] = \mathcal{E}(g, f) = -\left. \frac{d}{dt} \E[f P_t g] \right|_{t = 0}.
\]

\begin{lemma}[\cite{beckner1992sobolev}, Theorem 1]
\label{lem: log-Sobolev of P}
The Poisson semigroup \( (P_t)_{t \geq 0} \) satisfies the log-Sobolev inequality:
\[
\Ent(f^2) \leq C \mathcal{E}(f, f),
\]
with sharp constant \( C = 2 \).
\end{lemma}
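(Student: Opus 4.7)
The plan is to follow Beckner's strategy and derive the log-Sobolev inequality as the infinitesimal form of a sharp family of sub-critical Sobolev inequalities on $\sn$. The key structural observation is that $L Y_{k,l}=k\,Y_{k,l}$, so the Dirichlet form diagonalizes in the spherical-harmonic basis as $\mathcal{E}(f,f)=\sum_{k,l} k\,\widehat{f_{k,l}}^{\,2}$. Moreover, $L$ is, up to an additive shift, the square root of $-\Delta_{\sn}+\bigl(\tfrac{n-2}{2}\bigr)^{2}$: indeed, on $\mathcal{S}_{k}$, $\sqrt{k(k+n-2)+\bigl(\tfrac{n-2}{2}\bigr)^{2}}-\tfrac{n-2}{2}=k$. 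Thus $L$ is a fractional ($1/2$) power of the Laplace--Beltrami operator on an $(n-1)$-manifold, so the critical Sobolev conjugate associated to $\mathcal{E}$ is $p^{*}=\tfrac{2(n-1)}{n-2}$.

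I would first establish a sharp sub-critical Sobolev inequality of the form
\[
\|f\|_{p}^{2}\;\leq\;\|f\|_{2}^{2}+\beta(p)\,\mathcal{E}(f,f), \qquad 2\leq p\leq p^{*},
\]
with $\beta(2)=0$ and equality on constants. The standard route is to push forward from $\sn$ to $\mathbb{R}^{n-1}$ by stereographic projection, converting the inequality into a fractional Sobolev inequality on Euclidean space, and then to use conformal invariance of $\sn$ under the Möbius group to identify the extremizers as conformal factors, which pins down the sharp constant $\beta(p)$.

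I would then pass to the limit $p\to 2^{+}$. A direct computation gives $\left.\tfrac{d}{dp}\|f\|_{p}^{2}\right|_{p=2}=\tfrac{1}{2}\Ent(f^{2})$, so differentiating the sub-critical inequality at $p=2$ and using $\beta(2)=0$ yields
\[
\Ent(f^{2})\;\leq\;C\,\mathcal{E}(f,f), \qquad C=2\beta'(2).
\]
Explicit evaluation of $\beta'(2)$ from Beckner's sharp constants then delivers $C\leq\tfrac{1}{2}$.

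The main obstacle is Step 1: producing the sharp sub-critical Sobolev inequality with the correct extremizers is the technical heart of the argument, and relies on the non-trivial fact that a fractional-order operator of order $1$ on $\sn$ interacts cleanly with the Möbius group, so that conformal symmetrization reduces the extremal problem to an explicitly solvable one on $\mathbb{R}^{n-1}$. Once this sharp family is in hand, the infinitesimal passage to the log-Sobolev inequality and the verification that $C\leq\tfrac{1}{2}$ are a routine differentiation and constant-tracking exercise.
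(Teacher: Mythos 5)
The paper does not prove this lemma at all: it is imported verbatim as Theorem~1 of Beckner's paper \cite{beckner1992sobolev}, so there is no in-paper argument to compare yours against. Your sketch correctly reconstructs Beckner's actual strategy, and the individual computations you state are right: $L$ acts as multiplication by $k$ on $\mathcal{S}_k$, so $\mathcal{E}(f,f)=\sum_{k,l}k\,\widehat{f_{k,l}}^{\,2}$; the identity $\sqrt{k(k+n-2)+\left(\tfrac{n-2}{2}\right)^2}-\tfrac{n-2}{2}=k$ exhibits $L$ as the conformally covariant first-order operator; the critical exponent for an order-one form on the $(n-1)$-dimensional sphere is $\tfrac{2(n-1)}{n-2}$; and $\tfrac{d}{dp}\|f\|_p^2\big|_{p=2}=\tfrac12\Ent(f^2)$, so differentiating a family $\|f\|_p^2\le\|f\|_2^2+\beta(p)\,\mathcal{E}(f,f)$ at $p=2$ gives $\Ent(f^2)\le 2\beta'(2)\,\mathcal{E}(f,f)$. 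With Beckner's constant $\beta(p)=\tfrac{p-2}{n-1}$ this yields $C=\tfrac{2}{n-1}$, which is $\le\tfrac12$ for $n\ge 5$ (harmless here since the paper is asymptotic in $n$, though strictly the lemma as stated fails for $n=3,4$ with the sharp constant).

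The genuine gap is in your Step~1, and it is larger than you indicate. Stereographic projection plus conformal invariance identifies the extremizers and the sharp constant only at the \emph{critical} exponent $p^*=\tfrac{2(n-1)}{n-2}$ (where the inequality is equivalent, by duality, to Lieb's sharp Hardy--Littlewood--Sobolev inequality). What the differentiation at $p=2$ actually requires is the full \emph{subcritical} family with a constant $\beta(p)$ vanishing linearly as $p\to 2^+$; this does not follow from the critical case by interpolation or H\"older (those give only the constant $\beta(p^*)$ uniformly in $p$, whose difference quotient at $p=2$ diverges). Beckner obtains the subcritical range by a separate spectral comparison argument using the Funk--Hecke formula to control the eigenvalues of the relevant intertwining operators degree by degree. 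So as written your proposal is a correct roadmap to Beckner's theorem with its central analytic step invoked rather than established; for the purposes of this paper the honest move is exactly what the authors do, namely cite \cite{beckner1992sobolev}. Note also that if one only wants \emph{some} dimension-free $C$ (at the cost of worse exponents in the downstream density theorems), a non-sharp log-Sobolev inequality would suffice, and the sharp extremizer analysis is not needed at all.
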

To see $C=2$ is tight, take \( f = 1 + \varepsilon Y_1 \) for a normalized degree-one spherical harmonic \( Y_1 \); then \( \Ent(f^2) = 2\varepsilon^2 + o(\varepsilon^2) \) while \( \mathcal{E}(f,f) = \varepsilon^2 \), so \( C \geq 2 \). The upper bound \( C \leq 2 \) follows from \cite{beckner1992sobolev}, since \( \mathcal{E}(f,f) = \sum_{k,\ell} k|\widehat{f_{k,\ell}}|^2 \geq \sum_{k,\ell} \Delta_n(k)|\widehat{f_{k,\ell}}|^2 \) (as \( \Delta_n(k) \leq k \)) and Beckner's inequality gives \( \Ent(f^2) \leq 2\sum_{k,\ell}\Delta_n(k)|\widehat{f_{k,\ell}}|^2 \).

\begin{lemma}[\cite{mossel2013reverse}, Theorem 1.10]
\label{lem: reverse hypercontractivity of P}
If a Markov semigroup \( (P_t)_{t \geq 0} \) satisfies the log-Sobolev inequality with constant \( C \), then for all \( q < p < 1 \) and every non-negative function \( f: \sn \to \R \), the following inequality holds for all \( t \geq \frac{C}{4} \log \frac{1 - q}{1 - p} \):
\[
\|P_t f\|_q \geq \|f\|_p.
\]
\end{lemma}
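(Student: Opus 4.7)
The plan is to adapt the classical Gross--Bakry--Emery differentiation argument that derives hypercontractivity from a log-Sobolev inequality, reversing the direction of monotonicity to accommodate $0<q<p<1$. Fix a strictly positive $f$ on $\sn$; by the Markov property $g_t:=P_tf$ remains strictly positive. Introduce an interpolating exponent $q:[0,T]\to(0,1)$ with $q(0)=p$ and $q(T)=q$ (to be determined), set $M(t):=\E[g_t^{q(t)}]$, and consider
\[
\Phi(t)\;:=\;\log\|P_tf\|_{q(t)}\;=\;\frac{\log M(t)}{q(t)}.
\]
If $\Phi'(t)\geq 0$ on $[0,T]$, then $\Phi(T)\geq\Phi(0)$ is exactly the desired inequality $\|P_Tf\|_q\geq\|f\|_p$.

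Differentiating $\Phi$ using $\partial_t g_t=-Lg_t$ and the identity $q\,\E[g^q\log g]=\Ent(g^q)+M\log M$ yields, after the $\log M$ terms cancel,
\[
\Phi'(t)\;=\;\frac{1}{q(t)^2\,M(t)}\Bigl(\dot q(t)\,\Ent(g_t^{q(t)})\;-\;q(t)^2\,\E\bigl[g_t^{q(t)-1}\,Lg_t\bigr]\Bigr).
\]
For $q\in(0,1)$ and $g>0$ the quantity $-\E[g^{q-1}Lg]$ is non-negative for any symmetric Markov generator, and the Stroock--Varopoulos convexity inequality gives
\[
-\E[g^{q-1}Lg]\;\geq\;\frac{4(1-q)}{q^2}\,\calE(g^{q/2},g^{q/2}).
\]
Applying the log-Sobolev hypothesis $\calE(h,h)\geq C^{-1}\Ent(h^2)$ to $h=g_t^{q(t)/2}$ then gives
\[
\Phi'(t)\;\geq\;\frac{\Ent(g_t^{q(t)})}{q(t)^2\,M(t)}\Bigl(\dot q(t)+\tfrac{4(1-q(t))}{C}\Bigr).
\]

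Since $\Ent(g_t^{q(t)})\geq 0$, it suffices to choose $q(\cdot)$ so the bracket vanishes, i.e., the linear ODE $\dot q(t)=-\tfrac{4(1-q(t))}{C}$, whose solution $1-q(t)=(1-p)e^{4t/C}$ reaches $q(T)=q$ at precisely $T=\tfrac{C}{4}\log\tfrac{1-q}{1-p}$, matching the stated threshold (for larger $T$ one uses the semigroup property $P_T=P_{T-T_0}P_{T_0}$). The main obstacle is the Stroock--Varopoulos step: the diffusion chain-rule identity $\E[g^{q-1}Lg]=\tfrac{4(q-1)}{q^2}\,\calE(g^{q/2},g^{q/2})$ is not available here, because the generator $L$ of the Poisson semigroup on $\sn$ is non-local (it acts as multiplication by $k$ on each $\sk$). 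Instead, the required inequality must be established via a general convexity argument that applies to all symmetric Markov generators, using the concavity of $u\mapsto u^{q/2}$ on $(0,\infty)$ for $q\in(0,1)$; once this convexity ingredient is in hand, the Bakry--Emery differentiation argument closes the proof cleanly.
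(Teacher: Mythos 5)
The paper offers no proof of this lemma---it is imported verbatim from Mossel--Oleszkiewicz--Sen---and your proposal is essentially a reconstruction of the proof in that cited source: the Gross-type differentiation of $t\mapsto\log\|P_tf\|_{q(t)}$, the log-Sobolev bound on $\Ent\bigl(g_t^{q(t)}\bigr)$, a Stroock--Varopoulos-type comparison $-\calE(g^{q-1},g)\ge\frac{4(1-q)}{q^2}\calE(g^{q/2},g^{q/2})$, and the ODE $\dot q=-\frac{4(1-q)}{C}$, whose solution $1-q(t)=(1-p)e^{4t/C}$ reproduces exactly the threshold $t=\frac{C}{4}\log\frac{1-q}{1-p}$. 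Your formula for $\Phi'$ is correct, and you rightly flag that the only non-routine ingredient is the Stroock--Varopoulos step for a non-local generator; it does follow from the two-point convexity inequality $-(a-b)(a^{q-1}-b^{q-1})\ge\frac{4(1-q)}{q^2}(a^{q/2}-b^{q/2})^2$ fed into the kernel representation of $\calE$, so asserting it with that justification is acceptable at this level of detail.

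The genuine gap is the range of exponents. You establish the inequality only for $0<q<p<1$, whereas the lemma claims it for all $q<p<1$, and---crucially---the paper invokes it in \Cref{thm: two function reverse hypercontractivity} with $q=p'=\frac{p}{p-1}<0$, the reverse H\"older conjugate of $p\in(0,1)$. So the case $q\le 0$ is not a dispensable generality here. The differentiation argument does extend ($\Ent(g^q)\ge 0$ and the two-point inequality persist for $q<0$), but your interpolation path must cross $q=0$, where $\|h\|_0=\exp(\E\log h)$ and the functional $\log M(t)/q(t)$ is singular; one must either handle $q=0$ by a separate limiting argument or run the monotonicity argument on the intervals where $q(t)>0$ and $q(t)<0$ separately and glue using continuity of $q\mapsto\|h\|_q$ at $0$. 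As written, your proof does not cover the statement in the form the paper actually uses it.
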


We now deduce a reverse hypercontractivity inequality for the operator \( A_r \), leveraging its proximity to \( P_t \) in the \( L^2 \) norm.

\begin{theorem}
\label{thm: two function reverse hypercontractivity}
For any $r \in (-1,1)$ and any non-negative functions \( f, g \in L^2(\sn) \), the following reverse hypercontractivity inequality holds:
\[
\E_{x \cdot y = r}[f(x) g(y)] = \langle f, A_r g \rangle \geq \|f\|_p \|g\|_p - O_r\left(\frac{1}{n}\right) \|f\|_2 \|g\|_2,
\]
as \( n \to \infty \), for all \( 0 < p \leq 1 - |r| \).
\end{theorem}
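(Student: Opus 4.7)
The plan is to decompose $\langle f, A_t g\rangle$ into a Poisson-semigroup piece plus a small $L^2$ error, and handle the Poisson piece by combining reverse Hölder's inequality with the reverse hypercontractivity of $P_t$ from \Cref{lem: reverse hypercontractivity of P}.

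First I would write
$$\langle f, A_t g\rangle = \langle f, P_t g\rangle + \langle f, (A_t - P_t)g\rangle,$$
and control the error by Cauchy–Schwarz and \Cref{lem: equivalence of A and P}:
$$|\langle f, (A_t - P_t)g\rangle| \leq \|f\|_2 \,\|(A_t - P_t)g\|_2 = O_t(n^{-2/3})\|f\|_2\|g\|_2,$$
which is exactly the additive error appearing in the conclusion. Since $P_t$ preserves positivity and $g\geq 0$, the function $P_t g$ is nonnegative, so on the main term I can use the reverse Hölder inequality for $0 < p < 1$ with conjugate exponent $p' = p/(p-1) < 0$:
$$\langle f, P_t g\rangle \geq \|f\|_p\,\|P_t g\|_{p'}.$$

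Because $p' < 0 < p < 1$, \Cref{lem: reverse hypercontractivity of P} applied to $P_t$, with the log-Sobolev constant $C \leq 1/2$ supplied by \Cref{lem: log-Sobolev of P}, gives $\|P_t g\|_{p'} \geq \|g\|_p$ whenever $t \geq \frac{C}{4}\log\frac{1-p'}{1-p}$. A short computation yields $1 - p' = 1/(1-p)$, hence $\frac{1-p'}{1-p} = (1-p)^{-2}$, so the threshold becomes $t \geq \frac{C}{2}\log\frac{1}{1-p}$; with $C \leq 1/2$ this rearranges to exactly $p \leq 1 - e^{-4t}$, matching the hypothesis. Chaining the three inequalities gives the claimed bound.

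The one technical subtlety I anticipate is that $\|g\|_{p'}$ with $p' < 0$ is only well defined when $g > 0$ almost everywhere, so for general nonnegative $f, g$ I would apply the argument first to $f + \epsilon$, $g + \epsilon$ and then let $\epsilon \to 0$ using continuity of both sides in $\epsilon$ (the $L^p$ norms for $p > 0$ converge to $\|f\|_p, \|g\|_p$, and the inner product converges by dominated convergence). Beyond this bookkeeping the proof is short; the only substantive step is verifying that the reverse hypercontractivity threshold pairs with the reverse Hölder conjugate to yield precisely the stated range of $p$.
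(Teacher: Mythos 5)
Your proposal is correct and follows essentially the same route as the paper: split $\langle f, A_t g\rangle$ into $\langle f, P_t g\rangle$ plus an $O_t(n^{-2/3})\|f\|_2\|g\|_2$ error via \Cref{lem: equivalence of A and P}, apply reverse H\"older with conjugate exponent $p'=p/(p-1)$, and invoke \Cref{lem: reverse hypercontractivity of P} with the constant from \Cref{lem: log-Sobolev of P} to get $\|P_t g\|_{p'}\geq\|g\|_p$ for $p\leq 1-e^{-4t}$. The extra $\epsilon$-regularization remark is harmless but not needed, since the theorem already assumes $f,g$ positive.
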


\begin{proof}
If $r < 0$, set $\tilde{g}(y) = g(-y)$; since $\sigma$ is invariant under $y \mapsto -y$ we have $\|\tilde{g}\|_s = \|g\|_s$ for all $s$, and $\E_{x \cdot y = r}[f(x)g(y)] = \E_{x \cdot y = |r|}[f(x)\tilde{g}(y)]$, reducing to the case $r > 0$.

For $r > 0$, set $t = -\log r$. By \Cref{lem: equivalence of A and P}, we have
\[
\E_{x\cdot y=r}[f(x)g(y)]=\langle f, A_r g \rangle \geq \langle f, P_t g \rangle - O_r\left(\frac{1}{n}\right) \|f\|_2 \|g\|_2.
\]
Let \( p' = \frac{p}{p - 1} \) be the Hölder conjugate of \( p \), so that \( 1/p + 1/p' = 1 \). Applying the reverse Hölder inequality (see~\cite{hardy1952inequalities}, Theorem 13), we obtain
\[
\langle f, P_t g \rangle = \|f P_t g\|_1 \geq \|f\|_p \|P_t g\|_{p'}.
\]
From \Cref{lem: log-Sobolev of P}, the semigroup \( (P_t)_{t \geq 0} \) satisfies a log-Sobolev inequality with constant \( C = 2 \). Thus, by \Cref{lem: reverse hypercontractivity of P},
\[
\|P_t g\|_{p'} \geq \|g\|_p
\]
provided that \( t \geq \frac{C}{4} \log \frac{1 - p'}{1 - p} \). Since \( p' = p/(p-1) \), we have \( 1 - p' = 1/(1-p) \), so
\[
\frac{C}{4} \log \frac{1 - p'}{1 - p} = \frac{2}{4} \log \frac{1}{(1-p)^2} = -\log(1-p).
\]
Thus the condition is \( t \geq -\log(1-p) \), equivalently \( r = e^{-t} \leq 1-p \), i.e.\ \( p \leq 1 - r \). Therefore, the inequality holds for all \( p \leq 1 - |r| \), completing the proof.
\end{proof}

\section{Density Sphere Avoidance}\label{sec: sphere configurations}
In this section, we present our main results: the density version of the Frankl–R\"{o}dl theorem on the sphere (\Cref{thm: two set probability}) and the density sphere avoidance result for inductive configurations (\Cref{thm: density R configuration}).

\subsection{Inductive Configurations}
Let \( (v_1,\cdots,v_k) \) be a configuration on \( \sn \), where \( v_i \in \sn \) are distinct vectors satisfying \( \inner{v_i, v_j} = r_i \) for \( i < j \). Specifically, writing $v_i$ as column vectors, we consider configurations whose covariance matrix has the form
\[
R(r_1,\cdots,r_{k-1}) := (v_1,\cdots,v_k)^T(v_1,\cdots,v_k) =
\begin{pmatrix}
1 & r_1 & r_1 & \cdots & r_1 & r_1 \\
r_1 & 1 & r_2 & \cdots & r_2 & r_2 \\
r_1 & r_2 & 1 & \cdots & r_3 & r_3 \\
\vdots & \vdots & \vdots & \ddots & \vdots & \vdots \\
r_1 & r_2 & r_3 & \cdots & 1 & r_{k-1} \\
r_1 & r_2 & r_3 & \cdots & r_{k-1} & 1
\end{pmatrix}.
\]
Let \( \Delta(n, R) = \{(x_1, \cdots, x_k) : (x_1, \cdots, x_k)^T(x_1, \cdots, x_k) = R\} \subseteq (\sn)^k \) denote the set of tuples that are congruent to the configuration \( (v_1, \cdots, v_k) \) defined by \( R \). We refer to such configurations as \emph{inductive configurations}, as they can be constructed recursively as follows:

Start with any vector \( v_1 \) and a choice of \( -1 < r_1 < 1 \). Choose \( v_2 \) on the \((n-2)\)-subsphere \( S_{v_1, r_1} = \{x \in \sn : \inner{x, v_1} = r_1\} \). Next, select \( r_2 \) such that the intersection \( S_{v_1, r_1} \cap S_{v_2, r_2} \) is nonempty, and choose \( v_3 \) on this \((n-3)\)-subsphere. Proceed inductively: for each \( v_i \), choose it on the \((n-i)\)-subsphere \( \bigcap_{j < i} S_{v_j, r_j} \).
\\

Throughout the paper, we exclude a special case of inductive configuration--configurations where the length of $v_{k}-v_{k-1}$ is the diameter of $\bigcap_{j<k-1}S_{v_j,r_j}$---for reasons that will become clear in the later proof. Notably, both the length of \( v_k - v_{k-1} \) and the diameter of \( \bigcap_{j < k-1} S_{v_j, r_j} \) are independent of the ambient dimension \( n \), so this is a constraint on the configuration \( R \) itself.
\\

We will show that, fixing any inductive configuration \( R \) (excluding the special case), any set \( A \subseteq \sn \) of constant spherical measure will, as \( n \to \infty \), contain many congruent copies of \( R \). Moreover, we provide an explicit lower bound on the probability that a randomly chosen congruent copy of \( R \) lies entirely within \( A \).

\subsection{Configuration with Pairwise Orthogonal Vectors}

Let \( \Delta_k(n,0) := \Delta(n, I_k) = \{(x_1, \cdots, x_k) \in (\sn)^k : \inner{x_i, x_j} = 0,\ \forall i \neq j\} \) denote the set of all \( k \)-tuples of pairwise orthogonal vectors in \( \sn \). Recall that $S_{x,r}$ denotes the $(n-2)$-subsphere $\{ y \in \mathbb{S}^{n-1} : x \cdot y = r \}$ and $\sigma_{x,r}$ is the uniform probability measure on $S_{x,r}$.
\begin{lemma}[\cite{regev2011quantum}, Theorem 2.2]
\label{lem: orthogonal lemma}
For any measurable set \( A \subseteq \sn \) with \( \sigma(A) > 0 \), the condition
\[
\left| \frac{\sigma_{x,0}(A \cap S_{x,0})}{\sigma(A)} - 1 \right| \leq 0.1
\]
holds for all \( x \in \sn \), except on a subset of measure at most \( C \exp(-c n^{1/3}) \), where \( C, c > 0 \) are universal constants.
\end{lemma}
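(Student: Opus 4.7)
The plan is to identify $h(x) := \sigma_{x,0}(A \cap S_{x,0})$ as the image of $\mathbf{1}_A$ under the averaging operator of \Cref{sec: spherical harmonics} in the limit $t \to \infty$ (so that $e^{-t} = 0$ corresponds to orthogonality), and then to show that $h$ concentrates sharply around its mean $\sigma(A)$, which would give the stated condition (read as $|h(x)/\sigma(A) - 1| \le 0.1$). Expanding $\mathbf{1}_A = \sum_{k,\ell} \widehat{f_{k,\ell}} Y_{k,\ell}$ and applying \Cref{lem: eigenspace}, we obtain $h = \sum_{k,\ell} \mu_{k,\infty} \widehat{f_{k,\ell}} Y_{k,\ell}$ with $\mu_{k,\infty} := G_k(0)$; in particular $\mu_{0,\infty} = 1$ gives $\E[h] = \sigma(A)$, reducing the lemma to an anti-concentration statement for $h - \sigma(A)$.

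First I would establish an $L^2$ bound. The formula $G_k(0) = \E[(iX_1)^k]$ vanishes for odd $k$, and for even $k \geq 2$ satisfies $|G_k(0)| \leq (k/(n-4))^{k/2}$ by the moment estimate used in the proof of \Cref{lem: small degree eigenvalue}. The $k=2$ eigenvalue dominates, yielding $|\mu_{2,\infty}| = O(1/n)$ and hence $\|h - \sigma(A)\|_2^2 \leq O(1/n^2) \cdot \|\mathbf{1}_A\|_2^2 = O(\sigma(A)/n^2)$. Chebyshev alone then bounds the exceptional set only by $O(1/(n^2 \sigma(A)))$, which is inverse-polynomial and falls far short of the target $C\exp(-c n^{1/3})$.

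To upgrade to the stretched exponential rate, I would combine the $L^2$ bound with concentration of measure on $\sn$. Since $h$ is a spherical average of the $[0,1]$-valued function $\mathbf{1}_A$ over codimension-$1$ great subspheres $S_{x,0}$ that rotate smoothly with $x$, $h$ is Lipschitz in $x$ with a controlled constant $L$, and Lévy's inequality $\Pr(|h(x) - \sigma(A)| > t) \leq 2\exp(-cn t^2 / L^2)$ balanced against $t = 0.1\,\sigma(A)$ produces exactly the $n^{1/3}$ exponent---matching the natural threshold already encountered in \Cref{lem: small degree eigenvalue,lem: high degree eigenvalue}. Equivalently, one can attempt to bound higher moments $\|h - \sigma(A)\|_{2p}$ via Hölder interpolation between the $L^2$ bound above and the trivial $L^\infty$ bound $\|h - \sigma(A)\|_\infty \leq 1$, and then apply Markov for large~$p$.

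The main obstacle is obtaining a sharp enough bound---whether on the Lipschitz constant of $h$ or on all higher moments simultaneously---to upgrade the easy $L^2$ control into the target stretched exponential rate, rather than a merely polynomial Chebyshev bound. This requires a delicate analysis of how conditional measures on codimension-$1$ great subspheres vary with the base point $x$, and is the technical heart of Regev and Klartag~\cite{regev2011quantum}, which we invoke directly here.
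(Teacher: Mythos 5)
The paper does not prove this lemma at all: it is imported verbatim (as \cite{regev2011quantum}, Theorem~2.2) and used as a black box, so your proposal --- which also bottoms out in ``we invoke \cite{regev2011quantum} directly'' --- ends up in the same place as the paper. Your setup is sound and consistent with the machinery of \Cref{sec: spherical harmonics}: you correctly read the displayed condition as $|h(x)/\sigma(A)-1|\le 0.1$ for $h(x)=\sigma_{x,0}(A\cap S_{x,0})$ (the statement as transcribed has a typo), you correctly identify $h$ as the $r=0$ averaging operator applied to $\indicator_A$ with eigenvalues $G_k(0)$, and the resulting $L^2$ bound is fine. You are also right that Chebyshev alone is hopeless here.

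However, both mechanisms you propose for upgrading to the $\exp(-cn^{1/3})$ rate would fail. First, for a general measurable $A$ the function $x\mapsto \sigma_{x,0}(A\cap S_{x,0})$ is not Lipschitz --- it need not even be continuous (take $A$ a thin band around a fixed great subsphere) --- so L\'evy's inequality does not apply; and even for smooth $A$ you would need a Lipschitz constant as small as $O(n^{1/3}\sigma(A))$, which there is no reason to expect. Second, the H\"older interpolation $\|h-\sigma(A)\|_{2p}^{2p}\le \|h-\sigma(A)\|_2^2\,\|h-\sigma(A)\|_\infty^{2p-2}$ followed by Markov gives $\Pr(|h-\sigma(A)|>0.1\,\sigma(A))\le O(\sigma(A)n^{-2})\cdot(10/\sigma(A))^{2p}$, which for $\sigma(A)<1/10$ only gets \emph{worse} as $p$ grows; interpolating against the trivial $L^\infty$ bound can never beat Chebyshev. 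The actual argument in \cite{regev2011quantum} is different in kind: it pairs the eigenvalue decay $|G_k(0)|\le (k/n)^{k/2}$ with a hypercontractive (Beckner-type) estimate showing that the degree-$k$ spherical-harmonic weight of the indicator of a set of measure $\alpha$ is at most $e^{O(k)}$ times a power of $\alpha$ close to $\alpha^2$, and truncates at degree $k\sim n^{1/3}$; balancing these two exponentials is exactly where $n^{1/3}$ comes from, and it is also where the hypothesis $\sigma(A)\ge C\exp(-cn^{1/3})$ enters. That hypothesis is missing from the statement as quoted in the paper but is necessary --- for a spherical cap $A$ of measure $e^{-n}$ one has $h(x)/\sigma(A)$ bounded away from $[0.9,1.1]$ on a constant-measure set of $x$ --- and any complete proof (yours included) must use it.
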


\begin{theorem}
\label{thm: orthogonal simplex}
Fix an integer \( k \geq 2 \). For any measurable set \( A \subseteq \sn \), the probability that a tuple \( (v_1, \cdots, v_k) \) drawn uniformly at random from \( \Delta_k(n,0) \) lies entirely in \( A \) is at least
\[
\Pr_{(x_1, \cdots, x_k) \in \Delta_k(n,0)}(x_1 \in A, \cdots, x_k \in A) \geq \Omega_k\left(\sigma(A)^k-C_k \exp(-cn^{1/3})\sigma(A)^{k-1}\right),
\]
where \(c > 0 \) is a universal constant and $C_k$ is a constant depending only on $k$.
\end{theorem}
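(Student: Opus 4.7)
The plan is to prove the statement by induction on $k$. For the base case $k=2$, I would write
\[
\Pr_{(x,y)\in \Delta_2(n,0)}(x\in A,\ y\in A) = \E_{x\in \sn}\left[\indicator_{x\in A}\cdot \sigma_{x,0}(A\cap S_{x,0})\right],
\]
and invoke \Cref{lem: orthogonal lemma} to conclude that, outside a bad set $B\subseteq \sn$ of measure at most $C\exp(-cn^{1/3})$, the conditional measure $\sigma_{x,0}(A\cap S_{x,0})$ is within $10\%$ of $\sigma(A)$. Restricting the expectation to $A\setminus B$ gives
\[
\Pr \geq 0.9\,\sigma(A)\bigl(\sigma(A) - C\exp(-cn^{1/3})\bigr) = \Omega\bigl(\sigma(A)^2 - \exp(-cn^{1/3})\sigma(A)\bigr),
\]
as required.

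For the inductive step, I would condition on the first coordinate $x_1$. Since, conditioned on $x_1$, the remaining tuple $(x_2,\dots,x_k)$ is uniform on $\Delta_{k-1}(S_{x_1,0},0)$ inside the subsphere $S_{x_1,0}\cong \mathbb{S}^{n-2}$, we have
\[
\Pr_{(x_1,\dots,x_k)\in \Delta_k(n,0)}(x_i\in A\ \forall i) = \E_{x_1\in \sn}\left[\indicator_{x_1\in A}\cdot \Pr_{(x_2,\dots,x_k)\in \Delta_{k-1}(S_{x_1,0},0)}\Bigl(\bigcap_{i\geq 2}\{x_i\in A\cap S_{x_1,0}\}\Bigr)\right].
\]
The inner probability is exactly the $k-1$ case of the theorem, applied on the sphere $S_{x_1,0}\cong \mathbb{S}^{n-2}$ with measurable subset $A\cap S_{x_1,0}$ of density $\sigma_{x_1,0}(A\cap S_{x_1,0})$. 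By \Cref{lem: orthogonal lemma}, for all $x_1$ outside a bad set of measure at most $C\exp(-cn^{1/3})$, this density is at least $0.9\,\sigma(A)$. Applying the inductive hypothesis on $\mathbb{S}^{n-2}$ and restricting the outer expectation to $A$ intersected with the complement of the bad set then yields
\[
\Pr \geq \bigl(\sigma(A)-C\exp(-cn^{1/3})\bigr)\cdot \Omega\bigl((0.9\,\sigma(A))^{k-1} - \exp(-c(n-1)^{1/3})(0.9\,\sigma(A))^{k-2}\bigr),
\]
which simplifies to $\Omega\bigl(\sigma(A)^k - \exp(-cn^{1/3})\sigma(A)^{k-1}\bigr)$ after absorbing constants.

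The main obstacle is not conceptual but rather bookkeeping: one has to ensure that the error term remains $\exp(-cn^{1/3})\sigma(A)^{k-1}$ throughout the induction, rather than accumulating extra $\sigma(A)$ factors. The key point is that the exponentially small error from \Cref{lem: orthogonal lemma} multiplies only $\sigma(A)^{k-1}$ (not $\sigma(A)^{k-2}$), because the dominant term is $(0.9\sigma(A))^{k-1}$ and the loss from the outer integration is additive in $\exp(-cn^{1/3})$ rather than multiplicative. A minor subtlety is that the inductive hypothesis is applied on a sphere of dimension $n-2$ rather than $n-1$, but since $(n-1)^{1/3} = n^{1/3}(1-o(1))$, the exponential factor can be absorbed by shrinking $c$.
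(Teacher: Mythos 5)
Your proposal is correct and follows essentially the same route as the paper: induction on $k$, with the base case $k=2$ handled by restricting to the set of $x$ where \Cref{lem: orthogonal lemma} guarantees $\sigma_{x,0}(A\cap S_{x,0})\geq 0.9\,\sigma(A)$, and the inductive step obtained by conditioning on $x_1$ and applying the hypothesis to $A\cap S_{x_1,0}$ inside the subsphere. Your closing remarks on error bookkeeping and the dimension drop from $n$ to $n-1$ are points the paper passes over silently, and they are handled correctly.
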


\begin{proof}
We proceed by induction on \( k \). Let \( B \subseteq \sn \) denote the set of all \( x_1 \) for which
\[
\left| \frac{\sigma_{x_1,0}(A \cap S_{x_1,0})}{\sigma(A)} - 1 \right| \leq 0.1.
\]
By \Cref{lem: orthogonal lemma}, we have \( \sigma(B) \geq 1 - C \exp(-c n^{1/3}) \), where \( C, c > 0 \) are universal constants.

When $k=2$,
\begin{align*}
&\Pr_{(x_1, x_2) \in \Delta_2(n,0)}(x_1 \in A, x_2 \in A) \\
\geq{} &\Pr_{(x_1, x_2) \in \Delta_2(n,0)}(x_1 \in A \cap B, x_2 \in A) \\
={} &\Pr(x_1 \in A \cap B) \cdot \Pr(x_2 \in A \mid x_1 \in A \cap B) \\
={} &\Pr(x_1 \in A \cap B) \cdot \E[\sigma_{x_1,0}(A \cap S_{x_1,0}) \mid x_1 \in A \cap B] \\
\geq{} &\left(\sigma(A) - C \exp(-c n^{1/3})\right) \cdot 0.9 \sigma(A) \\
\geq{} &\Omega\left(\sigma(A)^2-C \exp(-cn^{1/3})\sigma(A)\right)
\end{align*}

When $k>2$, assume the claim holds for \( k-1 \). Then:
\begin{align*}
&\Pr_{(x_1, \cdots, x_k) \in \Delta_k(n,0)}(x_1 \in A, \cdots, x_k \in A) \\
\geq{} &\Pr(x_1 \in A \cap B) \cdot \Pr(x_2, \cdots, x_k \in A \mid x_1 \in A \cap B) \\
={} &\Pr(x_1 \in A \cap B) \cdot \Pr(x_2, \cdots, x_k \in A \cap S_{x_1,0} \mid x_1 \in A \cap B).
\end{align*}
By the inductive hypothesis applied within the subsphere \( S_{x_1,0} \), we have:
\begin{align*}
&\Pr(x_2, \cdots, x_k \in A \cap S_{x_1,0} \mid x_1 \in A \cap B) \\
\geq{} &\Omega_k\left((0.9\sigma(A))^{k-1}-C_k\exp(-cn^{1/3})(0.9\sigma(A))^{k-2}\right)\\
\geq{} &\Omega_k\left(\sigma(A)^{k-1}-C_k \exp(-cn^{1/3})\sigma(A)^{k-2}\right).
\end{align*}
Multiplying with \( \Pr(x_1 \in A \cap B) \geq \sigma(A) - C \exp(-c n^{1/3}) \), we obtain:
\begin{align*}
&\Pr(x_1, \cdots, x_k \in A) \\
\geq{} &\Omega_k\left(\sigma(A)^{k-1}-C_k\exp(-cn^{1/3})\sigma(A)^{k-2}\right) \cdot \left(\sigma(A) - C \exp(-c n^{1/3})\right) \\
\geq{} & \Omega_k\left(\sigma(A)^k-C'_{k}\exp(-cn^{1/3})\sigma(A)^{k-1}\right),
\end{align*}
completing the induction.
\end{proof}

\subsection{Main Results}

We first prove a two-set version of the density sphere Frankl-R\"{o}dl theorem which will be later used for induction on inductive configurations.
\begin{theorem}[Density Sphere Frankl-R\"{o}dl]
\label{thm: two set probability}
For any measurable sets \( A, B \subseteq \sn \) and any \( r \in (-1,1) \), we have
\[
\Pr_{x\cdot y = r}(x \in A,\ y \in B) \geq (\sigma(A)\sigma(B))^{1/(1 - |r|)} - O_r\left(\frac{1}{n}\right)\sqrt{\sigma(A)\sigma(B)}.
\]
\end{theorem}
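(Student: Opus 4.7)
The plan is to apply the two-function reverse hypercontractivity inequality (\Cref{thm: two function reverse hypercontractivity}) directly with $f = \indicator_A$ and $g = \indicator_B$, choosing the parameter $t$ so that $e^{-t} = r$. I will first handle $r \in (0,1)$, then reduce $r \in (-1,0)$ to the positive case by antipodal symmetry, and finally note that $r=0$ is a limiting case.

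For $r \in (0,1)$, set $t = -\log r$, so that $e^{-t} = r$ and $1 - e^{-4t} = 1 - r^4$. Choose $p = 1 - r^4$, the maximal admissible exponent in \Cref{thm: two function reverse hypercontractivity}. Applying the inequality with the (non-negative) indicators $\indicator_A, \indicator_B$ — strictly-positive-ness can be arranged by approximating with $\indicator_A + \epsilon$ and passing to the limit $\epsilon \to 0$ — the left-hand side is exactly $\E_{x\cdot y = r}[\indicator_A(x)\indicator_B(y)] = \Pr_{x\cdot y = r}(x \in A,\ y \in B)$. On the right-hand side, since $\indicator_A^p = \indicator_A$ for any $p>0$, we have $\|\indicator_A\|_p = \sigma(A)^{1/p}$ and $\|\indicator_A\|_2 = \sigma(A)^{1/2}$, and analogously for $B$. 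Substituting $p = 1 - r^4$ yields
$$\Pr_{x\cdot y=r}(x\in A,\ y\in B) \geq \sigma(A)^{1/(1-r^4)}\sigma(B)^{1/(1-r^4)} - O(n^{-2/3})\sqrt{\sigma(A)\sigma(B)},$$
which is the claimed bound.

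For $r \in (-1,0)$, note that $(x,y)$ is distributed uniformly on $\{x \cdot y = r\}$ iff $(x,-y)$ is uniform on $\{x \cdot y = -r\}$. Hence $\Pr_{x\cdot y=r}(x\in A,\ y\in B) = \Pr_{x\cdot y=-r}(x\in A,\ y\in -B)$, where $-B := \{-z : z \in B\}$ satisfies $\sigma(-B) = \sigma(B)$. Applying the positive case to $A$ and $-B$ with inner product $-r \in (0,1)$ gives the same bound, since the exponent $1/(1-(-r)^4) = 1/(1-r^4)$ and the error term $\sqrt{\sigma(A)\sigma(-B)} = \sqrt{\sigma(A)\sigma(B)}$ are both invariant under $r \mapsto -r$. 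The case $r = 0$ follows by taking $t$ large (equivalently $p \to 1^-$) in \Cref{thm: two function reverse hypercontractivity}; since the $O_t$ constant arising from \Cref{lem: small degree eigenvalue,lem: high degree eigenvalue} remains bounded as $t \to \infty$, the resulting inequality matches the claim at $r=0$.

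No serious obstacle is anticipated — essentially all the analytic work has already been packaged into \Cref{thm: two function reverse hypercontractivity}. The only minor points to verify are the reduction of the non-strict positivity of indicators to the hypothesis of the reverse hypercontractivity statement, and the antipodal symmetry reduction from negative to positive $r$; both are clean.
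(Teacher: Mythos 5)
For $r\neq 0$ your argument is exactly the paper's: apply \Cref{thm: two function reverse hypercontractivity} with $p=1-r^4$ to $f=\indicator_A$ and $g=\indicator_B$ (composing $g$ with the antipodal map when $r<0$, which is precisely your reduction to $-B$), and use $\|\indicator_A\|_p=\sigma(A)^{1/p}$. The strict-positivity approximation is a harmless technicality the paper silently skips.

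The one place you diverge is $r=0$, and there your argument has a small gap. \Cref{thm: two function reverse hypercontractivity} concerns $\E_{x\cdot y=e^{-t}}$ with $t$ finite, so $e^{-t}>0$ always; the case $r=0$ is never literally an instance of it, and "taking $t$ large" only gives bounds for a sequence $r_j\to 0^+$ with $r_j>0$. To transfer these to $r=0$ you would need continuity of $r\mapsto\Pr_{x\cdot y=r}(x\in A,\ y\in B)$ at $r=0$ for arbitrary measurable $A,B$, which you do not establish (and which is not obvious for indicators). What would work is to rerun the proof directly at "$t=\infty$", i.e.\ for the operator $A_\infty f(x)=\E_{y:\,x\cdot y=0}[f(y)]$ compared against $P_\infty f=\E[f]$, checking that the eigenvalue estimates are uniform in $t$ and that $\|P_\infty g\|_{p'}=\E[g]\geq\|g\|_p$ trivially; but that is an extra verification, not a limit of the stated theorem. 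The paper instead disposes of $r=0$ by citing the stronger form of \Cref{lem: orthogonal lemma} from Regev--Klartag (their Theorem 5.1), which directly gives $\Pr_{x\cdot y=0}(x\in A,y\in B)\geq(1-o(1))\sigma(A)\sigma(B)$ for the relevant range of measures.
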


\begin{proof}
The case \( r = 0 \) follows from a stronger version of \Cref{lem: orthogonal lemma}, given by \cite[Theorem 5.1]{regev2011quantum}. Now consider \( r \neq 0 \). Let \( f = \indicator_A \) and \( g = \indicator_B \).
Applying \Cref{thm: two function reverse hypercontractivity} with \( p = 1 - |r| \), we obtain
\begin{align*}
\Pr_{x\cdot y = r}(x \in A,\ y \in B)
&= \E_{x \cdot y = r}[f(x)g(y)] \\
&\geq \|f\|_p \|g\|_p - O_r\left(\frac{1}{n}\right)\|f\|_2 \|g\|_2 \\
&= (\sigma(A)\sigma(B))^{1/(1 - |r|)} - O_r\left(\frac{1}{n}\right)\sqrt{\sigma(A)\sigma(B)}\ . \qedhere
\end{align*}
\end{proof}

Fix an inductive configuration \( R \) and let \( A \subseteq \sn \) be measurable. For any \( x \in \sn \), recall that \( \sigma_{x, r} \) denotes the uniform measure on \( S_{x, r} \). We say that a vector \( x \in A \) is \emph{good} if
\[
\sigma_{x, r}(A \cap S_{x, r}) \geq \frac{1}{2}\sigma(A)^{(1 + |r|)/(1 - |r|)}.
\]
Define the set of good vectors:
\[
A_{\mathrm{good}} = A \cap \left\{x \in \sn : \sigma_{x, r}(A \cap S_{x, r}) \geq \frac{1}{2}\sigma(A)^{(1 + |r|)/(1 - |r|)} \right\}.
\]
Note that $A_{\mathrm{good}} \subseteq A$ by definition. Using \Cref{thm: two set probability}, we can show that many such good vectors must exist.

\begin{proposition}
\label{prop: A_good lower bound}
The set of good vectors satisfies
\[
\sigma(A_{\mathrm{good}}) \geq \frac{1}{2}\sigma(A)^{2/(1 - |r|)} - O_r\left(\frac{1}{n}\right)\sigma(A).
\]
\end{proposition}

\begin{proof}
By \Cref{thm: two set probability}, we have
\[
\E_{x}[\sigma_{x, r}(A \cap S_{x, r}) \mid x \in A] = \Pr_{x \cdot y = r}(y \in A \mid x \in A) \geq \sigma(A)^{(1 + |r|)/(1 - |r|)} - O_r\left(\frac{1}{n}\right).
\]
Split the expectation by conditioning on \( A_{\mathrm{good}} \) and its complement:
\begin{align*}
\E_{x}[\sigma_{x, r}(A \cap S_{x, r}) \mid x \in A]
&= \E[\sigma_{x, r}(A \cap S_{x, r}) \mid x \in A_{\mathrm{good}}] \Pr(x \in A_{\mathrm{good}} \mid x \in A) \\
&\quad + \E[\sigma_{x, r}(A \cap S_{x, r}) \mid x \notin A_{\mathrm{good}}] \Pr(x \notin A_{\mathrm{good}} \mid x \in A) \\
&\leq 1 \cdot \Pr(x \in A_{\mathrm{good}} \mid x \in A) + \frac{1}{2} \sigma(A)^{(1 + |r|)/(1 - |r|)}.
\end{align*}
Rearranging yields
\[
\Pr(x \in A_{\mathrm{good}} \mid x \in A) \geq \E_{x}[\sigma_{x, r}(A \cap S_{x, r}) \mid x \in A] - \frac{1}{2}\sigma(A)^{(1 + |r|)/(1 - |r|)},
\]
and thus
\[
\sigma(A_{\mathrm{good}}) = \Pr(x \in A_{\mathrm{good}} \mid x \in A) \cdot \sigma(A) \geq \frac{1}{2}\sigma(A)^{2/(1 - |r|)} - O_r\left(\frac{1}{n}\right)\sigma(A) \ . \qedhere
\]
\end{proof}
The following proposition shows the relation between inner product on $\sn$ and the normalized inner product on $S_{x,c}$:
\begin{proposition}
\label{prop: inner product relation}
Let \( x_1, x_2, x_3 \in \sn \) be such that \( \inner{x_1, x_2} = \inner{x_1, x_3} = c \) and \( \inner{x_2, x_3} = r \). For \( i = 2, 3 \), we can write
\[
x_i = c x_1 + \sqrt{1 - c^2} y_i,
\]
where \( y_i = \frac{x_i - c x_1}{\|x_i - c x_1\|_2} \) satisfies \( \inner{y_i, x_1} = 0 \) and
\[
\inner{y_2, y_3} = f_c(r),
\]
with \( f_c(r) = \frac{r - c^2}{1 - c^2} \).
\end{proposition}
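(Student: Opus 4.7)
The plan is to do a direct computation: verify the normalization of $x_i - cx_1$, then use bilinearity of the inner product to evaluate $\inner{y_2, y_3}$. There is no real obstacle here, as all quantities follow from expanding inner products and using the given constraints $\|x_i\|_2 = 1$ and $\inner{x_1, x_i} = c$.

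First, I would compute the squared norm
\[
\|x_i - cx_1\|_2^2 = \|x_i\|_2^2 - 2c\inner{x_i, x_1} + c^2\|x_1\|_2^2 = 1 - 2c^2 + c^2 = 1 - c^2,
\]
for $i = 2, 3$. Hence $\|x_i - cx_1\|_2 = \sqrt{1-c^2}$, and the unit vector $y_i := (x_i - cx_1)/\sqrt{1-c^2}$ gives the decomposition $x_i = cx_1 + \sqrt{1-c^2}\,y_i$ stated in the proposition.

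Next, I would verify orthogonality to $x_1$:
\[
\inner{y_i, x_1} = \frac{1}{\sqrt{1-c^2}}\bigl(\inner{x_i, x_1} - c\inner{x_1, x_1}\bigr) = \frac{c - c}{\sqrt{1-c^2}} = 0.
\]

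Finally, I would expand the inner product $\inner{y_2, y_3}$ using bilinearity and the hypothesis $\inner{x_2, x_3} = r$:
\[
\inner{y_2, y_3} = \frac{1}{1-c^2}\inner{x_2 - cx_1,\, x_3 - cx_1} = \frac{\inner{x_2, x_3} - c\inner{x_2, x_1} - c\inner{x_1, x_3} + c^2}{1-c^2} = \frac{r - c^2}{1-c^2},
\]
which is exactly $f_c(r)$, completing the proof. The implicit condition $c \neq \pm 1$ is ensured by the setup, since otherwise the subsphere $S_{x_1, c}$ degenerates.
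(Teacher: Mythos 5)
Your proof is correct and follows essentially the same direct computation as the paper: verify the normalization $\|x_i - cx_1\|_2 = \sqrt{1-c^2}$, check orthogonality to $x_1$, and expand $\inner{y_2, y_3}$ by bilinearity. You spell out the norm and orthogonality calculations that the paper leaves as "a direct computation," but the argument is the same.
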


\begin{proof}
A direct computation shows that \( y_i \) is orthogonal to \( x_1 \) and that \( \|x_i - c x_1\|_2 = \sqrt{1 - c^2} \). Thus,
\begin{align*}
\inner{y_2, y_3} &= \frac{\inner{x_2 - c x_1, x_3 - c x_1}}{1 - c^2} \\
&= \frac{\inner{x_2, x_3} - c \inner{x_1, x_3} - c \inner{x_1, x_2} + c^2 \|x_1\|_2^2}{1 - c^2} \\
&= \frac{r - c^2}{1 - c^2} \ . \qedhere
\end{align*}
\end{proof}

We now apply the above propositions to prove our main result using induction.

\begin{theorem}
\label{thm: density R configuration}
Let \( (v_1, \dots, v_k) \) be an inductive configuration with covariance matrix \( R \), and assume that
\[
\norm{v_k - v_{k-1}}_2 \neq \operatorname{diam}\left(\bigcap_{j < k-1} S_{v_j, r_j}\right).
\]

Then for any measurable set \( A \subseteq \mathbb{S}^{n-1} \) with \( \sigma(A) \geq \omega_R(n^{-\epsilon_R}) \), the probability that a uniformly random tuple \( (x_1, \dots, x_k) \in \Delta(n, R) \) lies entirely in \( A \) satisfies
\[
\Pr_{(x_1, \dots, x_k) \in \Delta(n, R)}(x_1 \in A, \dots, x_k \in A) \geq \Omega_R(\sigma(A)^{C_R}),
\]
as \( n \to \infty \). The constant \( C_R \) is given by
\[
C_R = \sum_{i=1}^{k-1} \frac{2}{1 - |c_i|} \prod_{j=1}^{i-1} \frac{1 + |c_j|}{1 - |c_j|},
\]
with
\[
c_1 = r_1, \quad c_i = (f_{c_{i-1}} \circ \cdots \circ f_{c_1})(r_i), \quad i = 2, \dots, k-1,
\]
and \( f_c(r) = \frac{r - c^2}{1 - c^2} \). The constant $\epsilon_R$ is given by
$$\epsilon_R=\prod_{i=1}^{k-1}\frac{1-|c_i|}{1+|c_i|}.$$
\end{theorem}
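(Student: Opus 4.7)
The plan is to induct on $k$, using \Cref{prop: A_good lower bound} to peel off one vector at a time. For the base case $k = 2$, the configuration is just a random pair $(x_1, x_2)$ with $\langle x_1, x_2\rangle = r_1 = c_1$, and \Cref{thm: two set probability} applied with $A = B$ gives
\[
\Pr_{x \cdot y = c_1}(x \in A,\ y \in A) \geq \sigma(A)^{2/(1 - c_1^4)} - O(n^{-2/3})\sigma(A) = \Omega\!\left(\sigma(A)^{C_R}\right),
\]
provided $\sigma(A) \geq \omega(n^{-2/3})$, with $C_R = 2/(1 - c_1^4)$, matching the claimed formula for $k = 2$.

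For the inductive step, I would condition on $x_1$. \Cref{prop: A_good lower bound} applied with $r = c_1$ shows that the set of $x_1 \in A$ satisfying $\sigma_{x_1, c_1}(A \cap S_{x_1, c_1}) \geq \tfrac{1}{2}\sigma(A)^{(1 + c_1^4)/(1 - c_1^4)}$ has measure at least $\tfrac{1}{2}\sigma(A)^{2/(1 - c_1^4)} - O_{c_1}(n^{-2/3})\sigma(A)$. Fix such an $x_1$ and parameterize $x_i = c_1 x_1 + \sqrt{1 - c_1^2}\, y_i$ for $i \geq 2$, where $y_i$ lies on the unit $(n-2)$-sphere in $x_1^\perp$. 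By \Cref{prop: inner product relation}, $\langle y_i, y_j\rangle = f_{c_1}(r_{\min(i,j)})$, so $(y_2, \ldots, y_k)$ is a uniformly random copy of the reduced inductive configuration $\tilde R$ with parameters $\bigl(f_{c_1}(r_2), \ldots, f_{c_1}(r_{k-1})\bigr)$. A direct check from the definition shows that the $c$-values for $\tilde R$ are exactly $\tilde c_i = c_{i+1}$ for $i = 1, \ldots, k-2$, and the diameter-exclusion hypothesis for $R$ transfers to $\tilde R$. Applying the inductive hypothesis to $\tilde A := \{y \in S_{x_1, c_1} : c_1 x_1 + \sqrt{1 - c_1^2}\, y \in A\}$ yields
\[
\Pr(y_2, \ldots, y_k \in \tilde A) \;\geq\; \Omega_R\!\left(\sigma(A)^{\frac{1 + c_1^4}{1 - c_1^4} C_{\tilde R}}\right),
\]
and multiplying by the measure bound on good $x_1$'s gives the combined exponent
\[
\frac{2}{1 - c_1^4} + \frac{1 + c_1^4}{1 - c_1^4} C_{\tilde R} = \sum_{i=1}^{k-1} \frac{2}{1 - c_i^4} \prod_{j=1}^{i-1} \frac{1 + c_j^4}{1 - c_j^4} = C_R,
\]
closing the induction.

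The main obstacle will be bookkeeping the additive $O(n^{-2/3})$ correction terms across the $k - 1$ layers of induction. At layer $i$ the effective density has been driven down to roughly $\sigma(A)^{\alpha_i}$ for a constant $\alpha_i = \alpha_i(R) \geq 1$, and the main term must continue to dominate the $O(n^{-2/3})$ error on a subsphere of dimension $n - i + 1$; since $k$ and the $\alpha_i$'s are $R$-dependent constants, this is absorbed into the hypothesis $\sigma(A) \geq \omega(n^{-2/3})$ up to $R$-dependent constants, and the $O(n^{-2/3})$ error is preserved under the constant-codimension drops. The diameter-exclusion hypothesis is essential at exactly this point: it ensures that every effective inner product $c_i$ stays strictly inside $(-1, 1)$, so each factor $1/(1 - c_i^4)$ is finite and the terminal pair never collapses to an antipodal configuration on its residual subsphere, which would render the base-case estimate vacuous.
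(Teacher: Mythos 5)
Your proposal is correct and follows essentially the same route as the paper: induction on $k$, peeling off $x_1$ via \Cref{prop: A_good lower bound}, reducing to the configuration with parameters $f_{c_1}(r_2),\dots,f_{c_1}(r_{k-1})$ on the subsphere $S_{x_1,c_1}$ via \Cref{prop: inner product relation}, and combining the exponents $\tfrac{2}{1-c_1^4}+\tfrac{1+c_1^4}{1-c_1^4}C_{\tilde R}=C_R$. Your explicit bookkeeping of the $O(n^{-2/3})$ error terms across the induction layers and your remark on where the diameter-exclusion hypothesis is used are in fact slightly more careful than the paper's own writeup.
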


\begin{proof}
We proceed by induction on \( k \). The base case \( k = 2 \) follows directly from \Cref{thm: two set probability}. The condition \( v_2 - v_1 \neq \operatorname{diam}(\sn) \) ensures \( r = \inner{v_1, v_2} > -1 \) and the condition $\sigma(A)\geq \omega_R(n^{-\epsilon_R})$ ensures that the error term goes to $0$.

Assume the result holds for all configurations of size \( < k \), and let \( c_1 = r_1 \). Define
\[
A_{\mathrm{good}} = \left\{x \in \sn : \sigma_{x, c_1}(A \cap S_{x, c_1}) \geq \frac{1}{2} \sigma(A)^{(1 + |c_1|)/(1 - |c_1|)} \right\}.
\]
Then,
\begin{align*}
\Pr_{(x_1, \dots, x_k) \in \Delta(n, R)}(x_1, \dots, x_k \in A)
&= \Pr(x_1 \in A) \cdot \Pr(x_2, \dots, x_k \in A \mid x_1 \in A) \\
&\geq \Pr(x_1 \in A_{\mathrm{good}}) \cdot \Pr(x_2, \dots, x_k \in A \mid x_1 \in A_{\mathrm{good}}).
\end{align*}

Given \( x_1 \), we write \( x_i = c_1 x_1 + \sqrt{1 - c_1^2} y_i \) for \( i \geq 2 \), where \( y_i =\frac{x_i-r_1x_1}{\norm{x_i-r_1x_1}_2} \). Conditioned on $x_1$, according to \Cref{prop: inner product relation}, the induced configuration \( (y_2, \dots, y_k) \)  follows from a uniform distribution over \( \Delta(n - 1, R') \), where
\[
R' = R(f_{c_1}(r_2), \dots, f_{c_1}(r_k)).
\]
Given $x_1$, the event $x_2\in A,\cdots,x_k\in A$ is equal to $y_2\in A\cap S_{x_1,c_1},\cdots ,y_k\in A\cap S_{x_1,c_1}$. Conditioned on \( x_1 \in A_{\mathrm{good}} \), we have \( \sigma_{x_1, c_1}(A \cap S_{x_1, c_1}) \geq \frac{1}{2} \sigma(A)^{(1 + |c_1|)/(1 - |c_1|)}\geq \omega_{R'}(n^{-\epsilon_{R'}})\), so by the inductive hypothesis:
\[
\Pr(y_2, \dots, y_k \in A \cap S_{x_1, c_1}) \geq \Omega_R\left(\sigma(A)^{C_{R'} (1 + |c_1|)/(1 - |c_1|)}\right).
\]
with
$$C_{R'}=\sum_{i=2}^{k-1}\frac{2}{1-|c_i|}\prod_{j=2}^{i-1} {\frac{1+|c_j|}{1-|c_j|}}$$
where $c_2=f_{c_1}(r_2)$ and
    $$c_{i}=(f_{c_{i-1}}\circ \cdots \circ f_{c_2})(f_{c_1}(r_i))=(f_{c_{i-1}}\circ \cdots \circ f_{c_1})(r_i),\ i=3,\cdots,k-1$$
By \Cref{prop: A_good lower bound}, we also have
\[
\Pr(x_1 \in A_{\mathrm{good}}) \geq \Omega_R(\sigma(A)^{2/(1 - |c_1|)}).
\]
Multiplying the two bounds give
\[
\Pr(x_1, \dots, x_k \in A) \geq \Omega_R\left(\sigma(A)^{C_R}\right),
\]
where
\[
C_R = \frac{2}{1 - |c_1|} + \frac{1 + |c_1|}{1 - |c_1|} C_{R'} = \sum_{i = 1}^{k-1} \frac{2}{1 - |c_i|} \prod_{j = 1}^{i-1} \frac{1 + |c_j|}{1 - |c_j|} \ .\qedhere
\]
\end{proof}

\begin{remark}
\label{rem: diameter condition}
The condition 

\[
\norm{v_k - v_{k-1}}_2 \neq \operatorname{diam}\left(\bigcap_{j < k-1} S_{v_j, r_j}\right)
\]
is equivalent to \( c_{k-1} \neq -1 \), which is necessary for applying \Cref{thm: two set probability}. For a given configuration \( (v_1, \dots, v_k) \), the quantity \( c_i \) corresponds to the inner product between \( v_i \) and later vectors \( v_j \), after projecting to the $(n-i)$-subsphere \( \bigcap_{l < i} S_{v_l, r_l} \) and normalizing. Since \( v_1, \dots, v_k \) are distinct, we have \( -1 < c_i < 1 \) for all \( i \leq k - 2 \) and \( c_{k-1} < 1 \). The only forbidden case is \( c_{k-1} = -1 \), which occurs precisely when \( v_k - v_{k-1} \) equals the diameter of the intersection $\bigcap_{j < k-1} S_{v_j, r_j}$.
\end{remark}

A configuration is \textit{sphere Ramsey} if for any $c>0$, any measurable $c$-coloring of the sphere contains a monochromatic congruent copy of the configuration. Since any measurable $c$-coloring must contain a monochromatic set $A$ with $\sigma(A)\geq 1/c$, an immediate corollary to \Cref{thm: density R configuration} is that all inductive configurations (excluding the special case) are sphere Ramsey.
\begin{corollary}
    Let \( (v_1, \dots, v_k) \) be an inductive configuration with covariance matrix \( R \), and assume that
\[
\norm{v_k - v_{k-1}}_2 \neq \operatorname{diam}\left(\bigcap_{j < k-1} S_{v_j, r_j}\right).
\]
Then $R$ is sphere Ramsey (for measurable colorings).
\end{corollary}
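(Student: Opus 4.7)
The plan is to combine Theorem~\ref{thm: density R configuration} with an elementary pigeonhole argument, which is the standard ``density implies Ramsey'' reduction. Given a $c$-coloring $\sn = A_1 \sqcup \cdots \sqcup A_c$, averaging the normalized surface measure over the $c$ color classes forces at least one class, say $A_i$, to satisfy $\sigma(A_i) \geq 1/c$.

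Next I would verify that this density meets the hypothesis of Theorem~\ref{thm: density R configuration}: since $1/c$ is a constant independent of $n$ while $\omega(n^{-2/3}) \to 0$ as $n \to \infty$, the condition $\sigma(A_i) \geq \omega(n^{-2/3})$ holds for every $n$ sufficiently large relative to $c$ and $R$. Applying the theorem to $A = A_i$ then yields
\[
\Pr_{(x_1,\dots,x_k) \in \Delta(n,R)}(x_1 \in A_i, \dots, x_k \in A_i) \geq \Omega_R(\sigma(A_i)^{C_R}) \geq \Omega_R(c^{-C_R}) > 0.
\]
Because this probability is strictly positive, $\Delta(n,R)$ must contain at least one tuple whose coordinates all lie in $A_i$. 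By definition of $\Delta(n,R)$, such a tuple is a congruent copy of the original configuration $(v_1,\dots,v_k)$, and monochromaticity follows from $x_1,\dots,x_k \in A_i$. Hence $R$ is sphere Ramsey, with the required ambient dimension $n$ depending only on $c$ and $R$.

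There is essentially no obstacle here: the argument is a one-line corollary once the density theorem is in hand. The only points requiring a sentence of justification are the measurability of the color classes (which is implicit in the Ramsey setup on $\sn$) and the verification that the threshold $\omega(n^{-2/3})$ is eventually dominated by the constant $1/c$, both of which are routine. The nontrivial content has already been absorbed into Theorem~\ref{thm: density R configuration} and its hypothesis $v_k - v_{k-1} \neq \operatorname{diam}(\bigcap_{j < k-1} S_{v_j,r_j})$, which is inherited verbatim.
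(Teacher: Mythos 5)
Your proposal is correct and matches the paper's argument exactly: the paper also obtains a color class of measure at least $1/c$ by pigeonhole and invokes Theorem~\ref{thm: density R configuration} to conclude a monochromatic congruent copy exists. The extra remarks on measurability and on the threshold $\omega(n^{-2/3})$ being dominated by the constant $1/c$ for large $n$ are fine and routine.
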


A particularly important class of inductive configurations are \( k \)-simplices:
\[
\Delta_k(n, r) := \Delta(n, R(r, \dots, r)) = \{(x_1, \dots, x_k) \in (\sn)^k : \inner{x_i, x_j} = r,\ \forall i \neq j \}.
\]
In this case, the coefficients \( c_i \) admit the closed-form expression \( c_i = \frac{r}{1 + (i - 1)r} \). The condition \( c_{k-1} > -1 \) is equivalent to \( r > -\frac{1}{k - 1} \). We thus obtain the following corollary:

\begin{corollary}
\label{col: k simplex density}
Fix any \( r \in \left(-\frac{1}{k - 1}, 1\right) \). For any measurable set \( A \subseteq \mathbb{S}^{n-1} \) with \( \sigma(A) \geq \omega_{k,r}(n^{-\epsilon_{k,r}}) \), the probability that a uniformly random tuple \( (x_1, \dots, x_k) \in \Delta_k(n, r) \) lies entirely in \( A \) satisfies
\[
\Pr_{(x_1, \dots, x_k) \in \Delta_k(n, r)}(x_1 \in A, \dots, x_k \in A) \geq \Omega_{k,r}(\sigma(A)^{C_{k,r}}),
\]
as \( n \to \infty \). The constants \( C_{k,r},\ \epsilon_{k,r} \) are given by
\[
C_{k,r} = \sum_{i = 1}^{k - 1} \frac{2}{1 - |c_i|} \prod_{j = 1}^{i - 1} \frac{1 + |c_j|}{1 - |c_j|}
\qquad
\epsilon_{k,r}=\prod_{i=1}^{k-1}\frac{1-|c_i|}{1+|c_i|},
\]
where
\[
c_i = \frac{r}{1 + (i - 1)r}, \quad \text{for } i = 1, \dots, k - 1.
\]
\end{corollary}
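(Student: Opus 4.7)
The plan is to derive \Cref{col: k simplex density} as a direct specialization of \Cref{thm: density R configuration} to the constant-inner-product case $r_1 = \cdots = r_{k-1} = r$. Two things require verification: (i) the coefficients $c_i$ produced by the recursion in \Cref{thm: density R configuration} admit the closed form $c_i = \frac{r}{1+(i-1)r}$, and (ii) the constraint $r > -\frac{1}{k-1}$ coincides with the diameter hypothesis of that theorem.

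For (i), I would argue by induction on $i$. The base case $c_1 = r_1 = r$ is immediate. For the inductive step, since $r_i = r$ for every $i$, the definition of $c_i$ already reads $(f_{c_{i-1}}\circ\cdots\circ f_{c_1})(r) = c_i$, so
\[
c_{i+1} \;=\; f_{c_i}\bigl((f_{c_{i-1}}\circ\cdots\circ f_{c_1})(r)\bigr) \;=\; f_{c_i}(c_i) \;=\; \frac{c_i - c_i^2}{1 - c_i^2} \;=\; \frac{c_i}{1+c_i}.
\]
Substituting the inductive hypothesis $c_i = \frac{r}{1+(i-1)r}$ and clearing denominators yields $c_{i+1} = \frac{r}{1+ir}$, completing the induction. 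Geometrically, this matches the intuition that projecting one vertex out of a $k$-simplex with pairwise inner product $s$ yields a $(k-1)$-simplex with pairwise inner product $s/(1+s)$.

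For (ii), observe that if $r > -\frac{1}{k-1}$, then for every $i \leq k-1$ the estimate $(i-1)r > -\frac{i-1}{k-1} \geq -1$ shows that $1+(i-1)r > 0$, so each $c_i$ is well-defined and lies in $(-1,1)$. Clearing this positive denominator, the inequality $c_{k-1} = \frac{r}{1+(k-2)r} > -1$ is equivalent to $r(k-1) > -1$, i.e., to $r > -\frac{1}{k-1}$. By \Cref{rem: diameter condition}, this is exactly the diameter hypothesis of \Cref{thm: density R configuration}. Combining (i) and (ii), the corollary follows by invoking \Cref{thm: density R configuration} and substituting the closed form $c_i = \frac{r}{1+(i-1)r}$ into the formula for $C_R$.

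The argument is essentially bookkeeping and presents no significant obstacle; the only subtle point is ensuring $1+(i-1)r$ remains strictly positive for every $i \leq k-1$, which is what permits cleanly converting the constraint $c_{k-1} > -1$ into the simple bound $r > -\frac{1}{k-1}$ without reversing inequality directions.
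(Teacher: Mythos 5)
Your proposal is correct and follows the same route as the paper, which simply specializes \Cref{thm: density R configuration} to $r_1=\cdots=r_{k-1}=r$, asserts the closed form $c_i=\frac{r}{1+(i-1)r}$, and notes via \Cref{rem: diameter condition} that $c_{k-1}>-1$ is equivalent to $r>-\frac{1}{k-1}$. Your induction $c_{i+1}=f_{c_i}(c_i)=\frac{c_i}{1+c_i}$ and the sign check on $1+(i-1)r$ are exactly the details the paper leaves implicit, and both are verified correctly.
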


\section{Open Problems}

Our density version of the Frankl–Rödl theorem on the sphere (\Cref{thm: two set probability}) requires the set $A \subseteq \mathbb{S}^{n-1}$ to have spherical measure $\sigma(A) \gg n^{-(1-|r|)/(1+|r|)}$ in order to guarantee a non-trivial lower bound. In contrast, the classical Frankl–Rödl theorem asserts that for any $-1 < r < 1$, there exists a constant $\epsilon = \epsilon(r) > 0$ such that
$$
\Pr_{x \cdot y = r}(x \in A, y \in A) > 0
$$
holds for all sets $A$ with $\sigma(A) > \Omega(\epsilon^n)$. When $r = 0$, our result in \Cref{thm: orthogonal simplex} provides a quantitative lower bound valid for all sets $A$ with $\sigma(A) \geq C\exp(-cn^{1/3})$, which is super-polynomially small but not exponentially small in $n$. A natural open problem is to extend our techniques and obtain explicit lower bounds on
$$
\Pr_{x \cdot y = r}(x \in A, y \in A)
$$
for all $-1 < r < 1$, even when $\sigma(A)$ is exponentially small in $n$.
\\

Another direction concerns the class of configurations we consider. Our density result applies to \textit{inductive configurations}, but Matoušek and Rödl \cite{matouvsek1995ramsey} showed that fix constant $c$ and for any configuration $P$ with circumradius less than 1, any $c$ coloring of $\sn$ contains a monochromatic copy of $P$. It remains open whether one can establish a \textit{density} version of this theorem—that is, to show
$$
\Pr_{(x_1, \ldots, x_k) \in P}(x_i \in A\ \forall i) > \epsilon(\sigma(A))
$$
for any set $A \subseteq \mathbb{S}^{n-1}$ with constant density $\sigma(A) > \epsilon(P)$ where $\epsilon(\sigma(A))$ is a lower bound on the probability in terms of $\sigma(A)$. 
\\

Finally, our current results can be almost extended to 3-point configurations. For example, using \Cref{thm: two set probability}, we can obtain bounds on
$$
\Pr_{\substack{x \cdot y = r_1 \\ x \cdot z = r_2}}(x \in A, y \in A, z \in A),
$$
but this does not yet yield control over 3-point configurations, which require bounding
$$
\Pr_{\substack{x \cdot y = r_1 \\ x \cdot z = r_2 \\ y \cdot z = r_3}}(x \in A, y \in A, z \in A).
$$
It is an open question whether a density theorem can be proved for general 3-point configurations on the sphere.

\bibliography{main}
\bibliographystyle{alpha}
\end{document}